
\documentclass[reqno, 11pt]{amsart}%
\usepackage{amssymb}
\usepackage[nesting]{hyperref}
\usepackage[pdftex]{graphicx}
\usepackage{listings}
\usepackage{multirow}
\usepackage{placeins}
\usepackage{color}
\usepackage{subfigure}
\usepackage{lscape}
\usepackage{dsfont}
\usepackage{amsmath}
\usepackage{amsfonts}%
\usepackage{tikz}
\usepackage{verbatim}
\setcounter{MaxMatrixCols}{30}
\providecommand{\U}[1]{\protect\rule{.1in}{.1in}}
\textheight=24cm \textwidth = 16cm \topmargin= -1cm \oddsidemargin
0mm \evensidemargin 0mm
\newcommand{\BlackBoxes}{\global\overfullrule5pt}

\BlackBoxes

\newcommand{\R}{\mathbb{R}}
\newcommand{\N}{\mathbb{N}}

\newcommand{\Eop}{\mathbb{E}}

\newcommand{\Q}{\mathbb{Q}}

\newcommand{\Supp}{\mathrm{supp}}

\newtheorem{theorem}{Theorem}
\newtheorem{corollary}[theorem]{Corollary}
\newtheorem{lemma}[theorem]{Lemma}

\theoremstyle{definition}

\newtheorem{remark}[theorem]{Remark}
\newtheorem{definition}[theorem]{Definition}
\numberwithin{equation}{section} \numberwithin{theorem}{section}
\def\0{\kern0pt\-\nobreak\hskip0pt\relax}

\makeatletter
\AtBeginDocument{ \def\@serieslogo{ \vbox to\headheight{ \parindent\z@ \fontsize{6}{7\p@}\selectfont
\vss}}}

\def\makeoverbar#1#2#3#4#5#6#7{ \setbox0=\hbox{$\m@th#2\mkern#5mu{{}#3{}}\mkern#6mu$} \setbox1=\null \dimen@=#4\fontdimen8#13 \dimen@=3.5\dimen@
\advance\dimen@ by \ht0 \dimen@=-#7\dimen@ \advance\dimen@ by \wd0
\ht1=\ht0 \dp1=\dp0 \wd1=\dimen@
\dimen@=\fontdimen8#13 \fontdimen8#13=#4\fontdimen8#13
\rlap{\hbox to \wd0{$\m@th\hss#2{\overline{\box1}}\mkern#5mu$}}
\fontdimen8#13=\dimen@}
\def\mylabel#1#2{{\def\@currentlabel{#2}\label{#1}}}
\makeatother

\begin{document}
\title[Martingale Optimal Transport in the Discrete Case Via Simple LP Techniques]{Martingale Optimal Transport in the Discrete Case Via Simple Linear Programming Techniques}
\author[N. \smash{B\"auerle}]{Nicole B\"auerle${}^*$}
\address[N. B\"auerle]{Department of Mathematics,
Karlsruhe Institute of Technology (KIT), D-76128 Karlsruhe, Germany}

\email{\href{mailto:nicole.baeuerle@kit.edu}
{nicole.baeuerle@kit.edu}}

\author[D. \smash{Schmithals}]{Daniel Schmithals${}^*$}
\address[D. Schmithals]{Department of Mathematics,
Karlsruhe Institute of Technology (KIT), D-76128 Karlsruhe, Germany}

\email{\href{daniel.schmithals@kit.edu} {daniel.schmithals@kit.edu}}

\thanks{${}^*$ Department of Mathematics,
Karlsruhe Institute of Technology (KIT), D-76128 Karlsruhe, Germany}
\begin{abstract}
We consider the problem of finding consistent upper price bounds and super replication strategies for exotic options, given the observation of call prices in the market. This field of research is called model-independent finance and has been introduced by \cite{hobson1998robust}. Here we use the link to mass transport problems. In contrast to existing literature we assume that the marginal distributions at the two time points we consider are discrete probability distributions. This has the advantage that the optimization problems reduce to linear programs and can be solved rather easily when assuming a general martingale Spence Mirrlees condition. We will prove the optimality of left-monotone transport plans under this assumption and provide an algorithm for its construction. Our proofs are simple and do not require much knowledge of probability theory. At the end we present an example to illustrate our approach.

\end{abstract}
\maketitle


\makeatletter \providecommand\@dotsep{5} \makeatother



\vspace{0.5cm}
\begin{minipage}{14cm}
{\small
\begin{description}
\item[\rm \textsc{ Key words} ]
{\small martingale optimal transport; linear programming; convex order; left-monotone transport plan}
\end{description}
}
\end{minipage}

\section{Introduction}
Classical models in financial mathematics work similar in principle. They fix a certain underlying probability space and assume that the random future behaviour of the underlying asset price process is specified somehow, for example as the solution of a stochastic differential equation. Further assuming no-arbitrage and completeness of the considered financial market, a unique equivalent martingale measure, i.e. a probability measure such that the discounted asset price process is a martingale, exists by the fundamental theorem of asset pricing. Then using the law of one price, it is possible to derive the uniquely determined price of an exotic option written on the underlying by calculating either the expected discounted payoff of the exotic option with respect to the equivalent martingale measure or the price of a self-financing, replicating hedging strategy. Most of the models are of parametric form, where the parameters are determined by calibrating the model to observable market prices of certain options.

The assumptions which are made are simplifying and often quite  unrealistic thus leading to model prices which are likely to be inaccurate and unreliable. Indeed, various studies, see e.g. \cite{schoutens2003perfect}, observe a great range of option prices when calibrating several different models to the same underlying market.

On the other hand, over the years, trading of call options became so liquid that  \cite{dupire1993model, dupire1994pricing} and others argued that they should rather be considered as contingent claims with exogenously fixed prices. Thus, the prices of European call options became available as information for pricing other, more complicated exotic options. In model-independent finance, this is used under the idealizing assumption that for the maturities of interest, the call option prices are observable for a continuum of strike prices.

The target of model-independent finance is now to price exotic options such that the prices satisfy no-arbitrage and are consistent with  observable call option prices. Therefore, no specific martingale measure but a set of different consistent martingale measures emerges from the analysis. Thus, no unique option price but a range of possible option prices may be derived. In return, the risk of model misspecification is eliminated. This intuition was first formalized by Hobson in his famous paper \cite{hobson1998robust}, where he used the no-arbitrage assumption and the knowledge about the call option prices to derive upper and lower price bounds for the lookback option in continuous time.

Such price bounds can be derived in various ways. There is one stream of literature which uses Skorokhod-type stopping problems to derive upper bounds (see e.g. \cite{hobson1998robust}) and another one using methods from optimal transport, see e.g. \cite{beiglbock2013model}. We will pursue the latter approach in this paper.

 In optimal transport, the problem is to minimize the cost that transportation of mass from one point to another generates in the sense that a cost-minimal transport allocation is aimed for. Mathematically, we may specify the mass at the origins and the destinations by measures. Then, minimizing the transport cost is equivalent to minimizing the integral over a usually bivariate function representing the cost of transporting a unit of mass from one point to another with respect to the set of all couplings or so-called transport plans which have the specified measures as marginals. The problem was originally introduced by Monge \cite{monge1781memoire} in 1781 and then refined by Kantorovich \cite{kantorovich1948problem, kantorovich2006problem} in 1948. A great variety of researchers considered the optimal transport problem and in the course of their research many important results on optimal transport were established, see for example  \cite{rachev1998mass, rachev1998application} or  \cite{villani2008optimal} for excellent monographes on the topic.

  Observing that there is an analogy between model-independent finance and optimal transport, as in both areas the marginals of the distribution over which some function is optimized are specified, \cite{beiglbock2013model} introduced a new research field that we refer to as martingale optimal transport. Reinterpreting the transport cost function as the payoff function of an exotic option and implementing the usual martingale condition of mathematical finance, the minimization problem of optimal transport cost evolves to the lower price bound problem of model-independent finance. Also, properly implementing the martingale condition in the dual problem of optimal transport, a pricing-hedging duality is shown using only the usual assumptions of model-independent finance, no-arbitrage and consistency with call options prices.

In this paper we consider the martingale optimal transport with discrete margins. This is in contrast to previous papers, see e.g. \cite{henry2016explicit} where continuous margins are  assumed. Moreover, in the discrete case it is possible to solve the problem which reduces to a linear program  with simple techniques. We do not rely on any previous result in this direction and present a stand-alone work. We are also able to generalize some notions like the martingale Spence Mirrlees condition which appears in \cite{henry2016explicit}. For payoff functions with this property we prove the optimality of a left-monotone transport plan, show its uniqueness and present an algorithm to compute the optimal solution. Under the martingale Spence Mirrlees condition the optimality of a left-monotone transport plan has for continuous marginals been shown in \cite{henry2016explicit} and for general marginals in \cite{beiglbock2017monotone}. Also note that the recent paper \cite{hobno18} considers the construction of optimal left-monotone transport plans for general margins. However, their results are far less explicit. The duality with the superhedging problem is trivial in the discrete case.

Our paper is organized as follows: After a mathematical rigorous introduction of the problem and the presentation of some well-known facts about the convex order, we present the linear programming formulation in Section \ref{sec:model}. In Section \ref{sec:spence} we define the generalized Spence Mirrlees condition and prove the optimality of the left-monotone transport plan. In the next section we present an algorithm which solves the primal problem. In Section \ref{sec:dual} we consider an algorithm which solves the dual problem. The last section contains an example.

\section{Preliminary Results and Original Problem}
Consider a financial market with one risk-free asset and one risky asset.
We allow only for two trading times which we denote by $0<t<T$.  The risk-free asset has no interest and its price is given by by $B=(B_t,B_T)=(1,1)$ with $B_0=1$ and for the risky asset we write $S=(S_t,S_T)=(X,Y)$ with $S_0 =s_0 \in \R$. Throughout we will assume that $X$ and $Y$ are discrete random variables and we consider a canonical construction, i.e. for the probability space we may choose $\Omega:=\Omega_1\times \Omega_2:= \{x_1,\ldots,x_N\}\times \{y_1,\ldots ,y_M\}$ and $X(\omega)=X(x,y)=x$ and $Y(\omega)=Y(x,y)=y.$ The $\sigma$-algebra is given by the power set. In what follows we denote by $\mathcal{P}(\Omega)$ the set of all probability measures on $\Omega$. Finally, we denote by $C_t(k)$ the price of a call-option with strike $k$ and maturity $t$, i.e. the payoff is given by $(X-k)^{+}$ and by $C_T(k)$ the price of a call-option with strike $k$ and maturity $T$, i.e. the payoff is given by $(Y-k)^{+}$. Calls are frequently traded and we assume that these prices can be observed at the market. The question in model-independent finance now is to find bounds on consistent prices for other derivatives on this market. A general payoff function of such a derivative in our framework is given by
  \[
       c:\Omega\to \R,\quad (x,y) \mapsto c(x,y),
  \]
  i.e.\ the payoff depends only on $S_t$ and $S_T$.
The no-arbitrage assumption generally implies the existence of a probability measure $\Q$ such that the price process $(X,Y)$ is a martingale under $\Q$ and the risk-neutral pricing formula holds, i.e. the price of a derivative with payoff $c$ is
\begin{equation}\label{eq:priceformula}
\Eop_\Q[c(X,Y)].
\end{equation}  An interesting result in this direction is given in \cite{bl} where it is shown that the observation of call prices $C_t(k)$  for all strikes $k$ implies the knowledge of the marginal at time $t$ of the consistent pricing measure $\Q$. More precisely the following is true.

  \begin{lemma}[{\cite[Sec. 2]{bl}}]\label{Lem Breeden Litzenbeger}
    Let $\Q \in \mathcal{P}(\Omega)$ be consistent with the price functions of call options, i.e. for $t,T$ and all $k \in \R$, we have
    \begin{eqnarray*}\label{Lem Breeden Litzenberger Consitency}
      C_t(k)       = \int_{\R}(x-k)_+ \Q(d (x,y)),\\
       C_T(k)       = \int_{\R}(y-k)_+ \Q(d (x,y)).\
    \end{eqnarray*}
    Then we have
    \begin{eqnarray*}\label{Lem Breeden Litzenberger Distribution}
	  \Q(X\leq k)=1+C_t'(k+),\\
	  \Q(Y\leq k)=1+C_T'(k+)
	\end{eqnarray*}
	for the distribution function of $X$ and $Y$ under $\Q$, where $C_t'(\cdot +)$ denotes the right side derivative of $C_t$ and likewise for $C_T$.
  \end{lemma}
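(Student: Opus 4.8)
The plan is to reduce the statement to a computation about the one-dimensional marginal laws of $X$ and $Y$ under $\Q$ and then to differentiate the call price functions directly. Write $\mu$ for the law of $X$ under $\Q$ and $\nu$ for the law of $Y$ under $\Q$; in the present setting both are finitely supported, $\mu=\sum_{i=1}^N \Q(X=x_i)\,\delta_{x_i}$ and $\nu=\sum_{j=1}^M \Q(Y=y_j)\,\delta_{y_j}$. The consistency hypothesis then reads $C_t(k)=\int_\R (x-k)_+\,\mu(dx)$ and $C_T(k)=\int_\R (y-k)_+\,\nu(dy)$ for every $k\in\R$, so that $C_t$ and $C_T$ are finite sums of the convex, piecewise linear functions $k\mapsto (x_i-k)_+$ and $k\mapsto (y_j-k)_+$; in particular they are convex and piecewise linear, hence right-differentiable everywhere.

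Next I would compute the right derivative. For fixed $x$ and $h>0$,
\[
  \frac{(x-k-h)_+ - (x-k)_+}{h}=
  \begin{cases}
    -1, & x>k+h,\\[2pt]
    -\dfrac{x-k}{h}, & k<x\le k+h,\\[4pt]
    0, & x\le k,
  \end{cases}
\]
so the difference quotient is bounded in absolute value by $1$ and, as $h\downarrow 0$, converges for every $x\ne k$ to $-\mathds{1}\{x>k\}$ (and equals $0$ at $x=k$). Applying dominated convergence --- or, since $\mu$ is finitely supported, simply differentiating the finite sum term by term --- one obtains
\[
  C_t'(k+)=-\int_\R \mathds{1}\{x>k\}\,\mu(dx)=-\Q(X>k)=\Q(X\le k)-1,
\]
which rearranges to $\Q(X\le k)=1+C_t'(k+)$. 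The argument for $C_T$ and the law of $Y$ is word for word the same and gives $\Q(Y\le k)=1+C_T'(k+)$.

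The only delicate point is the interchange of limit and integral in the passage to the right derivative, equivalently the fact that $C_t$ fails to be differentiable precisely at the atoms of $\mu$; this is exactly why the one-sided derivative $C_t'(\cdot+)$ appears in the statement, and why the limit must be taken through $h\downarrow 0$ rather than through $h\to 0$. In the discrete case this ``obstacle'' is essentially vacuous, because $C_t$ is a finite sum of explicit piecewise linear functions and the term-by-term differentiation requires no convergence theorem at all; the dominated-convergence formulation is only needed if one wants the statement for general marginal laws as in \cite{bl}.
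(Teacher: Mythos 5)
Your proof is correct. Note that the paper itself gives no proof of this lemma --- it is quoted from Breeden--Litzenberger \cite{bl} as a known fact --- so there is nothing to compare against; the argument you give (reduce to the marginal law, observe that $C_t$ is a convex mixture of the piecewise linear functions $k\mapsto(x-k)_+$, and compute the right derivative of the finite sum term by term, or via dominated convergence with the bound $1$ on the difference quotients) is the standard derivation and is carried out correctly, including the correct identification $C_t'(k+)=-\Q(X>k)$ and the observation that the kinks of $C_t$ sit exactly at the atoms of the marginal, which is why only the one-sided derivative appears in the statement.
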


Thus for pricing further derivatives we assume that the marginal distributions of the risky asset price process are known under the consistent pricing measure. But of course this does not imply the joint distribution $\Q$ which is necessary to compute general prices in \eqref{eq:priceformula}. However we have the further information that $(X,Y)$ has to be a martinagle under $\Q$ which restricts the distributions.

  \begin{definition}\label{Def MTP}
    Let $\mu,\nu$ be probability measures on $\Omega_1$  and  $\Omega_2$ respectively. The elements of the set
    \begin{eqnarray*}
      \mathcal{M}_2(\mu,\nu):=
       \left\{ \Q \in \mathcal{P}(\Omega)\mid \mu(A) = \Q(A\times \Omega_2), \nu(B) = \Q(\Omega_1\times B), \Eop_{\Q}[Y\mid X] = X  \ \Q-\text{a.s.}  \right\}
    \end{eqnarray*}
     are called \textit{martingale transport plans} or \textit{potential pricing measures}.
  \end{definition}

Hence $\mathcal{M}_2(\mu,\nu)$  is the set of all possible consistent pricing measures when we assume that $\mu$ is the distribution of $X$ and $\nu$ is the distribution of $Y$.
 Then we define for a given derivative with payoff $c$ the upper price bound problem
  \begin{equation}\label{Eq Upper Price Bound Standard Case}
    P(\mu,\nu):= \sup_{\Q \in \mathcal{M}_2(\mu,\nu)}\Eop_\Q[c(X,Y)].
    \end{equation}
    and the lower price bound problem
    \begin{equation}\label{Eq Lower Price Bound Standard Case}
   \underline P(\mu,\nu):= \inf_{\Q \in \mathcal{M}_2(\mu,\nu)}\Eop_\Q[c(X,Y)].
 \end{equation}
 In what follows we will concentrate on problem \eqref{Eq Upper Price Bound Standard Case}. Problem \eqref{Eq Lower Price Bound Standard Case} is in some sense symmetric (see Remark \ref{rem:lower}).
We will later see that in our setting this problem reduces to a simple linear program which can be solved efficiently. However in case $c$ satisfies a certain property, the solution of this linear program is given by a special structure and we will provide an algorithm for the solution.


In what follows when we consider measures $\mu$  on $\R$, we always assume that they are finite, i.e.   $\mu(\R)<\infty$ and that the integral exists, i.e.  $\int |x|\mu(dx)<\infty$. Also the next definition of the convex order is introduced for measures (see e.g. \cite{s51}) and not only probability measures as it is often done.

 \begin{definition}\label{Def Convex Order}
    Two measures $\mu,\nu $ on $\R$ are said to be in \textit{convex order}, denoted by $\mu\leq_c\nu$, if for any convex function $f:\R \to \R$ such that the integrals exist,
    \[
      \int_{\R} f(x) \mu( d x) \leq \int_{\R} f(x) \nu( d x).
    \]
      \end{definition}
Since both $f(x)=x$ and $f(x)=-x$ are convex as well as $f(x)=1$ and $f(x)=-1$, the property $\mu\leq_c\nu$ implies that $\int x\mu(dx)=\int x\nu(dx)$ and $\mu(\R)=\nu(\R)$.

  \begin{definition}\label{Def Call Option Price Func}
    For a measure  $\mu $ on $\R$ the corresponding \textit{call option price function}  is defined by
    \[
          C_\mu:\R \to \R_+, \quad  k \mapsto \int_{\R}(x-k)_{+} \mu( d x).
        \]
  \end{definition}

In arbitrage-free markets it is well-known that call option prices increase with the maturity, i.e. we have $C_\mu(k) \le C_\nu(k)$. Recall here that  $\mu$ is the distribution of $X$ and $\nu$ is the distribution of $Y$ for all $k$, otherwise arbitrage opportunities exist. Now we obtain the following relation which can be found in \cite{chong74}:

  \begin{lemma}\label{Prop Char Conv Ord MultiMarginal}
    Let $\mu, \nu$ be measures on $\R$ with $\frac{1}{\mu(\R)}\int x\mu(dx)= \frac{1}{\nu(\R)}\int x\nu(dx)$. Then the following are equivalent.
    \begin{enumerate}
          \item[1.] $\mu \leq_c  \nu$.
          \item[2.] $C_{\mu} \leq  C_{\nu}$.
    \end{enumerate}
      \end{lemma}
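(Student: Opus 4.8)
The plan is to establish the two implications via the integral characterization of call prices. The key observation is that for a fixed strike $k$, the function $x \mapsto (x-k)_+$ is convex, so if $\mu \leq_c \nu$ then $C_\mu(k) = \int (x-k)_+ \mu(dx) \leq \int (x-k)_+ \nu(dx) = C_\nu(k)$ directly from Definition \ref{Def Convex Order}. This gives implication $1 \Rightarrow 2$ with essentially no work, and crucially does not even require the equal-mean hypothesis.

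For the converse $2 \Rightarrow 1$, I would argue as follows. First, fix an arbitrary convex function $f : \R \to \R$ for which the integrals exist; the goal is $\int f\, d\mu \leq \int f\, d\nu$. The standard device is to approximate $f$ from below by a combination of affine functions and call payoffs. Concretely, every convex function on $\R$ can be written, up to an affine part, as a limit of nonnegative combinations of the functions $x \mapsto (x-k)_+$; more precisely, one uses that for a convex $f$ with right derivative $f'_+$, one has the representation $f(x) = f(0) + f'_+(0)\,x + \int (x-k)_+ \,d\lambda(k)$ over $k > 0$ plus a symmetric term $\int (k-x)_+\, d\lambda(k)$ over $k \le 0$, where $\lambda$ is the (nonnegative) second-derivative measure of $f$. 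Since $(k-x)_+ = (x-k)_+ - x + k$, the put payoffs can be rewritten in terms of call payoffs plus an affine function. Integrating this representation against $\mu$ and against $\nu$, the affine terms $\int (a + bx)\,d\mu$ and $\int (a+bx)\,d\nu$ agree because $\mu(\R) = \nu(\R)$ (take $f \equiv 1$ and $f \equiv -1$ in hypothesis 2, or note it is implicit in the finiteness/normalization setup) and the means agree by assumption, while the call terms satisfy $\int (x-k)_+\,d\mu \le \int (x-k)_+\,d\nu$ for every $k$ by hypothesis 2; an application of Tonelli's theorem to interchange the integral in $k$ with the integral in $x$ then yields $\int f\,d\mu \le \int f\,d\nu$.

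The main obstacle is making the integral representation of a general convex function rigorous and controlling the interchange of integrals, since $f$ need not be bounded and $\lambda$ need not be finite near infinity; one handles this by first proving the inequality for convex functions that are affine outside a compact set (where the representation is a genuine finite combination and Tonelli is immediate), and then passing to general $f$ by a monotone approximation argument, truncating $f$ to be affine outside $[-n,n]$ with matching slopes and letting $n \to \infty$, invoking the integrability hypothesis on $f$. Alternatively, since the paper's standing assumption already restricts attention to the discrete case, one could give a softer argument: when $\mu$ and $\nu$ have finite support the representation is a finite sum and all interchanges are trivial, so the lemma in the generality needed later follows at once. I would present the affine-outside-a-compact-set version in full and remark that the general case follows by approximation, citing \cite{chong74} for the measure-theoretic details.
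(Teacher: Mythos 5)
The paper itself does not prove this lemma; it is quoted from \cite{chong74}. Your overall strategy is the standard one and the direction $1\Rightarrow 2$ is correct and immediate, exactly as you say: $x\mapsto (x-k)_+$ is convex, so no use of the mean hypothesis is needed. The decomposition of a convex function into an affine part plus a nonnegative mixture of call payoffs, proved first for functions affine outside a compact set and then extended by monotone approximation with tangent-line truncations, is also a perfectly workable way to get $2\Rightarrow 1$.

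There is, however, one genuine gap in the converse direction: you claim the affine terms cancel ``because $\mu(\R)=\nu(\R)$ (take $f\equiv 1$ and $f\equiv -1$ in hypothesis 2)''. Hypothesis 2 is only the inequality $C_\mu\leq C_\nu$ for call payoffs; constant functions are not call payoffs, so you cannot test hypothesis 2 against them (that move is only available under hypothesis 1). From $C_\mu\leq C_\nu$ one can extract, by letting $k\to-\infty$ and using $C_\mu(k)=-k\,\mu(\R)+\int x\,\mu(dx)+o(1)$, only the one-sided bound $\mu(\R)\leq\nu(\R)$, not equality. The equality of total masses is genuinely needed and genuinely missing: with $\mu=\delta_0$ and $\nu=2\delta_0$ the normalized means agree and $C_\mu\leq C_\nu$, yet $\int(-1)\,d\mu=-1>-2=\int(-1)\,d\nu$, so $\mu\not\leq_c\nu$ and the implication $2\Rightarrow 1$ fails as literally stated. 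The fix is to add $\mu(\R)=\nu(\R)$ as a standing hypothesis (under which the affine terms do cancel, since equal masses and equal normalized means give equal integrals of every affine function); this costs nothing in the paper, where the lemma is only ever applied to probability measures or to residual measures obtained by subtracting identical amounts of mass from both marginals. With that hypothesis made explicit, the rest of your argument (Tonelli for the compactly supported second-derivative measure, then monotone approximation using integrability of the first moments) goes through.
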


Thus the condition $\mu \leq_c  \nu$ is a natural assumption in our setting, since $\mu$ and $\nu$ are probability measures and hence $\mu(\R)=\nu(\R)=1$, $(X,Y)$ is a martingale and thus has constant expectation $\int x\mu(dx)= \int x\nu(dx)$ and due to no arbitrage requirements we have that $C_{\mu} \leq  C_{\nu}$.  Hence Lemma \ref{Prop Char Conv Ord MultiMarginal} implies the convex order between the marginal distributions $\mu$ and $\nu$.

Problem \eqref{Eq Upper Price Bound Standard Case} has a dual problem which is given by (see e.g. \cite{beiglbock2013model})
  \begin{align}\label{Eq Super Hedging Standard Case}
 & \inf_{(\varphi,\psi,h) \in \mathcal{D}_{2}}\left\{ \int_{\R}\varphi(x)\mu(d x) + \int_{\R}\psi(y)\nu(d y) \right\}
    \\ \notag
    &=\inf_{(\varphi,\psi,h) \in \mathcal{D}_{2}}\left\{ \Eop_{\mu}[\varphi(X)]+\Eop_{\nu}[\psi(Y)] \right\},
  \end{align}
  where
\begin{align*}
    \mathcal{D}_{2}:= \{(\varphi,\psi,h) \;|\; &\varphi,h :\Omega_1 \to \R, \psi : \Omega_2\to \R,
    \varphi(x) + \psi(y)+ h(x)(y-x) \geq c(x,y), \ (x,y) \in \Omega \}.
  \end{align*}
 This problem may be interpreted as finding the cheapest super-replication strategy for the payoff $c$.    Hedging strategies of this form are called semi-static hedging strategies, as $\varphi$ and $\psi$ may be interpreted as static investments in European options with maturity $t$ and $T$ respectively, while $h$ may be understood as a dynamic investment in the underlying asset.   Again we will see that this problem reduces to a linear program and it is indeed the dual linear program to \eqref{Eq Upper Price Bound Standard Case}.

\section{The Problem with Discrete Marginals}\label{sec:model}
In what follows we assume that $\mu$ and $\nu $ are discrete probability distributions, i.e. there are $N,M \in \N $ such that
    \[
        \mu=\sum_{j=1}^{N}\omega_{j}\delta_{x_j} \quad \text{ and }\quad  \nu=\sum_{i=1}^{M}\vartheta_i\delta_{y_i},
    \]
    where $\omega_j, \vartheta_i \geq 0 $, $x_j,y_i \in \R$ for all $j=1,\ldots,N $ and all $i=1,\ldots , M$ and $\sum_{j=1}^{N}\omega_j=\sum_{i=1}^{M}\vartheta_i=1$. $\delta_x$ is the Dirac measure in point $x$. Moreover we assume that $\mu \leq_c \nu$.

 Under this assumptions, martingale transport plans $\Q \in \mathcal{M}_{2}(\mu,\nu)$ are of the form
  \[
      \Q=\sum_{j=1}^{N}\sum_{i=1}^{M} q_{j,i}\delta_{(x_j,y_i)},
  \]
  where the following additional constraints have to be satisfied.
  \begin{enumerate}
       \item The masses of $\Q$ are non-negative, i.e. we have $q_{j,i} \geq 0$, for all $j=1,\ldots, N $ and all $i=1,\ldots,M$.
      \item The marginal distributions of $\Q$ are $\mu$ and $\nu$, i.e. we have
           \begin{align*}
                       \sum_{i=1}^{M}q_{j,i}=\omega_j, \quad   j=1,\ldots,N,
                       \\
                       \sum_{j=1}^{N} q_{j,i}=\vartheta_i, \quad    i=1,\ldots, M.
                     \end{align*}
          This implies $\sum_{j=1}^{N}\sum_{i=1}^{M}q_{j,i}=1$ such that $\Q$ is indeed a probability measure.
      \item The measure $\Q$ satisfies the martingale condition. Transferring the classic condition $\Eop_{\Q}\left[Y\mid X \right]=X$ to the discrete situation, we have \[
            \sum_{i=1}^{M}\frac{q_{j,i}}{\omega_j}y_i=x_j, \quad  j = 1 ,\ldots, N,
          \]
          as $\frac{q_{j,i}}{\omega_j} = \frac{\Q((X,Y)=(x_j,y_i))}{\Q(X=x_j)}$ is the  conditional distribution $\Q(Y=y_i|X=x_j)$. Rewriting this as
           \[
                       \sum_{i=1}^{M}q_{j,i}y_i=\omega_jx_j\iff \sum_{i=1}^{M}q_{j,i}y_i - \sum_{i=1}^{M}q_{j,i}x_j = 0 \iff \sum_{i=1}^{M} q_{j,i}(y_i-x_j)=0,
                     \]
                     we find an alternative condition.
       \end{enumerate}

   Altogether, the upper price bound problem in \eqref{Eq Upper Price Bound Standard Case} reduces in the discrete case to the following  linear program
  \begin{align}\label{Prob Upper Price Bound Discrete}\tag{P}
    \max \quad   &\sum_{j=1}^{N} \sum_{i=1}^{M}q_{j,i}c(x_j,y_i)  := \sum_{j=1}^{N} \sum_{i=1}^{M}q_{j,i}c_{j,i}
    \\ \notag
    \text{s.t.} \quad  &\sum_{i=1}^{M}q_{j,i}=\omega_j, \quad  j=1,\ldots, N ,
    \\ \notag
   &\sum_{j=1}^{N}q_{j,i}=\vartheta_i, \quad  i=1,\ldots,M,
    \\ \notag
    & \sum_{i=1}^{M} q_{j,i}(y_i-x_j)=0, \quad j =1 ,\ldots, N,
    \\ \notag
    &   q_{j,i} \geq 0, \quad j=1,\ldots,N, i=1,\ldots, M.
 \end{align}
 The first two equations guarantee that we have the correct  marginal distributions, the last equation is the martingale condition.
The dual problem is given by
  \begin{align}\label{Prob Super Hedging Discrete}\tag{D}
    \min \quad &\sum_{j=1}^{N}\omega_j \varphi_j + \sum_{i=1}^{M} \vartheta_i \psi_i
    \\
    \notag \text{s.t.}\quad  &\varphi_j + \psi_i + h_j (y_i-x_j)\geq c_{j,i},\quad  j=1,\ldots,N, i=1,\ldots,M,
    \\
    \notag& \varphi_j,h_j,\psi_i \in \R, \quad  j=1,\ldots,N, i=1,\ldots,M,
  \end{align}
and can be interpreted as  a super hedging problem.

The next result is due to \cite{s51} and has later been generalized  in various ways (see also \cite{str}).

\begin{lemma}\label{lem:sherman}
It holds that $\mu \leq_c \nu$ if and only if there exists $q_{j,i} \geq 0$ such that
  \begin{align}
    &\sum_{i=1}^{M}q_{j,i}=\omega_j, \quad  j=1,\ldots, N ,
    \\ \notag
   &\sum_{j=1}^{N}q_{j,i}=\vartheta_i, \quad  i=1,\ldots,M,
    \\ \notag
    & \sum_{i=1}^{M} q_{j,i}(y_i-x_j)=0, \quad j =1 ,\ldots, N.
 \end{align}

\end{lemma}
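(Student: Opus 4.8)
The plan is to prove the two implications separately: the ``if'' direction is a one-line Jensen computation, while the ``only if'' direction is where linear programming enters, via Farkas' lemma. For the ``if'' direction, assume $q_{j,i}\ge 0$ satisfy the three displayed families of equations and fix a convex $f:\R\to\R$. For an index $j$ with $\omega_j>0$, the numbers $q_{j,i}/\omega_j$, $i=1,\dots,M$, form a probability vector with barycenter $\sum_{i}(q_{j,i}/\omega_j)y_i=x_j$ by the martingale constraint, so Jensen's inequality for finitely supported measures gives $\sum_i q_{j,i}f(y_i)\ge\omega_j f(x_j)$; if $\omega_j=0$, then $q_{j,i}=0$ for all $i$ by non-negativity and the first constraint, and the same inequality is trivial. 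Summing over $j$ and using the two marginal constraints, $\int f\,d\nu=\sum_i\vartheta_i f(y_i)\ge\sum_j\omega_j f(x_j)=\int f\,d\mu$, and since $f$ was arbitrary and all integrals are finite (finite supports), $\mu\leq_c\nu$.

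For the ``only if'' direction, view the three families of equations together with $q\ge 0$ as a linear feasibility system $Aq=b$, $q\ge 0$, and suppose toward a contradiction that it has no solution. By Farkas' lemma --- the linear programming fact that $Aq=b$, $q\ge 0$ is solvable precisely when it admits no dual certificate of infeasibility --- there would then exist reals $\varphi_j,\psi_i,h_j$ with
\[
  \varphi_j+\psi_i+h_j(y_i-x_j)\ \ge\ 0\quad\text{for all }j,i,\qquad\text{and}\qquad \sum_{j=1}^N\omega_j\varphi_j+\sum_{i=1}^M\vartheta_i\psi_i\ <\ 0 ;
\]
this is precisely a feasible point of the dual program \eqref{Prob Super Hedging Discrete} for the cost $c\equiv 0$ having negative objective, the absence of $h$ from the objective reflecting the zero on the right-hand side of the martingale constraints.

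Now set $g(y):=\max_{1\le j\le N}\bigl(-\varphi_j+h_jx_j-h_jy\bigr)$, a maximum of finitely many affine functions, hence convex and finite-valued on $\R$. Reading the displayed inequalities for a fixed $i$ and all $j$ gives exactly $\psi_i\ge g(y_i)$, and evaluating the $j$-th affine piece of $g$ at the point $x_j$ gives $g(x_j)\ge-\varphi_j$, i.e.\ $\varphi_j\ge -g(x_j)$. Since the weights $\omega_j,\vartheta_i$ are non-negative,
\[
  \sum_{j=1}^N\omega_j\varphi_j+\sum_{i=1}^M\vartheta_i\psi_i\ \ge\ \sum_{i=1}^M\vartheta_i g(y_i)-\sum_{j=1}^N\omega_j g(x_j)\ =\ \int_{\R}g\,d\nu-\int_{\R}g\,d\mu\ \ge\ 0 ,
\]
where the last inequality is the definition of $\mu\leq_c\nu$ applied to the convex function $g$ (both integrals finite by finite support). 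This contradicts the strict inequality above, so the feasibility system must have a solution.

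The crux is the ``only if'' direction, and within it the step of reassembling an arbitrary Farkas certificate $(\varphi,\psi,h)$ into a single convex test function $g$ by a Legendre-type maximum, after which $\mu\leq_c\nu$ closes the argument in one line; the sign bookkeeping and the Jensen estimate are routine. A more hands-on alternative would be an induction on $N+M$ that transports mass between the smallest atom of $\mu$ and the atoms of $\nu$ surrounding it, but the duality argument is cleaner and matches the ``simple LP techniques'' spirit of the paper.
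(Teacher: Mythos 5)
Your proposal is correct. Note, however, that the paper does not prove this lemma at all: it is stated as a known result and attributed to Sherman \cite{s51} (with Strassen \cite{str} for generalizations), so there is no in-paper argument to compare against. Your two directions are both sound. The ``if'' direction via Jensen's inequality applied to the conditional probability vectors $q_{j,\cdot}/\omega_j$ (with the correct handling of $\omega_j=0$) is the standard and complete argument. The ``only if'' direction is the substantive part, and your Farkas certificate is set up correctly: the dual vector $(\varphi,\psi,h)$ paired with the rows of the constraint matrix yields exactly $\varphi_j+\psi_i+h_j(y_i-x_j)\ge 0$ with objective $\sum_j\omega_j\varphi_j+\sum_i\vartheta_i\psi_i<0$, and the key step --- folding the certificate into the single convex function $g(y)=\max_j(-\varphi_j+h_jx_j-h_jy)$, then using $\psi_i\ge g(y_i)$ and $\varphi_j\ge -g(x_j)$ to contradict $\int g\,d\nu\ge\int g\,d\mu$ --- is valid, since $g$ is a finite maximum of affine functions and the finite supports make all integrals exist, as the paper's Definition~\ref{Def Convex Order} requires. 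Your argument is thus a genuine addition rather than a variant: it makes the paper self-contained on this point, and it does so precisely in the ``simple LP techniques'' spirit the authors advertise, whereas the cited sources prove the result in greater generality by other means. The only cosmetic remark is that you could phrase the contrapositive directly (a feasible dual certificate would contradict convex order) rather than as a proof by contradiction, but this changes nothing mathematically.
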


Thus, we immediately obtain the following existence result.

\begin{theorem}\label{theo:existence}
The linear programs (P) and (D) have optimal solutions $q^*$ and $(\varphi^*,h^*,\psi^*)$ and $\sum_{i=1}^{M}q^*_{j,i}c_{j,i} = \sum_{j=1}^{N}\omega_j \varphi^*_j + \sum_{i=1}^{M} \vartheta_i \psi^*_i$, i.e. the optimal value of the objective functions coincide.
\end{theorem}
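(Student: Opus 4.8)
\noindent\emph{Proof strategy.}
The plan is to deduce everything from the nonemptiness of the feasible set of \eqref{Prob Upper Price Bound Discrete}, which is precisely the content of Lemma~\ref{lem:sherman}, combined with the classical strong duality theorem of linear programming; no probabilistic input beyond the convex order assumption is needed.

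First I would show that \eqref{Prob Upper Price Bound Discrete} possesses an optimal solution. Since $\mu\leq_c\nu$ is assumed, Lemma~\ref{lem:sherman} produces nonnegative numbers $q_{j,i}$ satisfying exactly the marginal and martingale constraints of \eqref{Prob Upper Price Bound Discrete}, so its feasible set is nonempty. Every feasible point obeys $0\le q_{j,i}\le\sum_{i'=1}^M q_{j,i'}=\omega_j\le 1$, so the feasible set is a bounded polyhedron and hence compact; the linear objective $q\mapsto\sum_{j=1}^N\sum_{i=1}^M q_{j,i}c_{j,i}$ then attains its maximum on it at some point $q^*$, and in particular the value of \eqref{Prob Upper Price Bound Discrete} is finite.

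Next I would record that \eqref{Prob Super Hedging Discrete} is feasible — take $h_j:=0$, $\psi_i:=0$ and $\varphi_j:=\max_{1\le i\le M}c_{j,i}$, which trivially satisfies $\varphi_j+\psi_i+h_j(y_i-x_j)\ge c_{j,i}$ — and bounded below: multiplying the constraint $\varphi_j+\psi_i+h_j(y_i-x_j)\ge c_{j,i}$ by $q_{j,i}\ge 0$, summing over $i$ and using $\sum_i q_{j,i}=\omega_j$ and $\sum_i q_{j,i}(y_i-x_j)=0$, then summing over $j$ and using $\sum_j q_{j,i}=\vartheta_i$, yields the weak duality inequality $\sum_{j,i}q_{j,i}c_{j,i}\le\sum_j\omega_j\varphi_j+\sum_i\vartheta_i\psi_i$ for every pair of feasible points. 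Hence \eqref{Prob Super Hedging Discrete} is a feasible, bounded below linear program and attains its minimum at some $(\varphi^*,h^*,\psi^*)$.

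Finally, since \eqref{Prob Super Hedging Discrete} is the linear programming dual of \eqref{Prob Upper Price Bound Discrete} — a routine computation of the Lagrangian dual of the equality/inequality constrained program \eqref{Prob Upper Price Bound Discrete}, already announced after its formulation — the strong duality theorem of linear programming applies: a feasible linear program with finite optimal value has a dual with the same finite optimal value. This gives $\sum_{j=1}^N\sum_{i=1}^M q^*_{j,i}c_{j,i}=\sum_{j=1}^N\omega_j\varphi^*_j+\sum_{i=1}^M\vartheta_i\psi^*_i$, which is the assertion. The only place where the hypotheses genuinely enter is the appeal to Lemma~\ref{lem:sherman} for nonemptiness of the primal feasible set; the remaining, essentially bookkeeping, obstacle is to make sure that \eqref{Prob Super Hedging Discrete} really is the dual of \eqref{Prob Upper Price Bound Discrete} in standard form, and apart from that the argument is entirely standard linear programming theory.
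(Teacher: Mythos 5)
Your proposal is correct and follows essentially the same route as the paper: feasibility of (P) via Lemma~\ref{lem:sherman}, boundedness of the feasible region to get an optimal $q^*$, and the strong duality theorem of linear programming to obtain an optimal dual solution with the same value. The extra details you supply (explicit dual feasibility and the weak duality computation) are fine but not needed beyond what the paper's one-line appeal to strong duality already covers.
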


\begin{proof}
Lemma \ref{lem:sherman} implies that (P) has feasible points. Since $q_{j,i} \in [0,1]$ the target function is bounded and thus by the classical existence theorem for linear programs an optimal solution for (P) exists. But by the strong duality theorem of linear programming this implies that (D) has a solution and the optimal values of the objective functions coincide.
\end{proof}

\section{Left-Monotonicity and a kind of Martingale Spence Mirrlees Condition}\label{sec:spence}
We will consider the linear program (P) with functions $c$ having a special property.
In this case it is possible to prove that a solution $\Q^*=q^*$ of (P) has a certain property.
The following definition is due to \cite{beiglboeck2016problem}.

 \begin{definition}\label{def:leftmo}
  A martingale transport plan ${\Q \in \mathcal{M}_2(\mu,\nu)}$ is called \textit{left monotone}, if there is a set $\Gamma \subseteq \Supp(\mu)\times \Supp(\nu)$ with ${\Q( \Gamma)=1}$ and such that for $(x,y^-),(x,y^+),(x',y')\in \Gamma$ with $x<x'$, we have $y' \notin (y^-,y^+)$.
  \end{definition}

    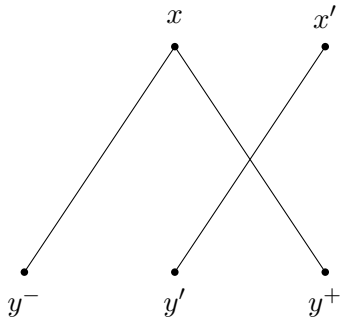
\begin{figure}[!htbp]\begin{center}

       \begin{tikzpicture}[scale=1]

          \node  at (0,0.4) {$x$};
          \node at (2,0.4) {$x'$};
         \fill (0,0) circle (0.05);
         \fill (2,0) circle (0.05);

          \node at (-2,-3.4) {$y^{-}$};
      \node at (0,-3.4) {$y'$};
          \node at (2,-3.4) {$y^{+}$};
           \fill (-2,-3) circle (0.05);
                  \fill (0,-3) circle (0.05);
                  \fill (2,-3) circle (0.05);

                  \draw(0,0)--(-2,-3);
                     \draw(0,0)--(2,-3);
                     \draw(2,0)--(0,-3);

        \end{tikzpicture}
        \caption{Forbidden configuration for left monotonicity.}
  \end{center}
    \end{figure}

\begin{theorem}\label{theo:lmopt}
Let $c:\R^2 \to \R$ be a payoff function such that for all $x'>x$ and $y^{+}>y'>y^{-}$ with $x,x' \in \Supp(\mu)$ and $ y^{-}, y',y^{+} \in \Supp(\nu)$ we have
   \begin{equation}\label{eq:optcondition}
      \lambda\left[ c(x',y^{+})-c(x,y^{+}) \right] + (1-\lambda)\left[ c(x',y^{-})-c(x,y^{-}) \right] - \left[ c(x',y')-c(x,y') \right] > 0,
    \end{equation}
    where $\lambda = \frac{y'-y^{-}}{y^{+}-y^{-}}\in[0,1]$. Then an optimal $q^*=\Q^*$ of (P) is left-monotone. The (partial) converse is also true: If  there exists a unique solution of (P) which is left-monotone, then $c$ has to satisfy \eqref{eq:optcondition}. 

\end{theorem}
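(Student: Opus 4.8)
The plan is to exploit the linear-programming structure: optimality of a feasible $q^*$ is governed entirely by complementary slackness against a dual-feasible $(\varphi,h,\psi)$, so "swapping mass around a forbidden configuration strictly increases the objective" is exactly the obstruction to be ruled out. First I would argue the forward direction by contradiction. Suppose $q^* = \Q^*$ is optimal but not left-monotone. Then for every choice of set $\Gamma$ carrying $\Q^*$ the forbidden configuration of Definition~\ref{def:leftmo} occurs; concretely, there are indices with $x_j < x_{j'}$ (both in $\Supp(\mu)$) and $y^- = y_{i^-} < y' = y_{i'} < y^+ = y_{i^+}$ (all in $\Supp(\nu)$) such that the four "corner" masses $q^*_{j,i^-}, q^*_{j,i^+}, q^*_{j',i'}$ are all strictly positive (the row $x_j$ puts mass on both $y^-$ and $y^+$, and the row $x_{j'}$ puts mass on the intermediate $y'$). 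The key step is then to construct an explicit mass-reallocation: decrease $q^*_{j',i'}$ and $q^*_{j,i^\pm}$, and increase $q^*_{j,i'}$ and $q^*_{j',i^\pm}$, by amounts proportional to $\lambda$ and $1-\lambda$ with $\lambda = \frac{y'-y^-}{y^+-y^-}$, choosing the scale $\varepsilon>0$ small enough that all perturbed masses stay nonnegative. One must check that this perturbation preserves both marginal constraints (row sums at $x_j$ and $x_{j'}$, column sums at $y^-,y',y^+$ are all unchanged) and the martingale constraints at rows $j$ and $j'$ — the latter is precisely where the convex combination $\lambda y^+ + (1-\lambda) y^- = y'$ is used, so that moving mass off $y'$ onto the pair $\{y^-,y^+\}$ in the barycentric proportions leaves $\sum_i q_{j,i}(y_i - x_j)$ intact. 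The resulting change in the objective is exactly $\varepsilon$ times the left-hand side of \eqref{eq:optcondition}, which is strictly positive by hypothesis, contradicting optimality of $q^*$. Hence some $\Gamma$ with $\Q^*(\Gamma)=1$ avoids the forbidden configuration, i.e.\ $q^*$ is left-monotone.

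For the partial converse, assume (P) has a \emph{unique} optimal solution $q^*$ and that $q^*$ is left-monotone; I want to deduce \eqref{eq:optcondition}. Suppose, for contradiction, that for some $x<x'$ and $y^- < y' < y^+$ in the supports the bracketed expression in \eqref{eq:optcondition} is $\le 0$. The idea is to reverse the construction above: one would like a feasible $q$ that realizes a forbidden configuration on these four points and has objective value $\ge$ that of $q^*$, contradicting either uniqueness or optimality. To produce such a $q$, start from $q^*$ and perform the \emph{opposite} reallocation — move mass from $(x_j,y')$ and $(x_{j'},y^\pm)$ onto $(x_j,y^\pm)$ and $(x_{j'},y')$ — which again preserves all constraints and changes the objective by $-\varepsilon$ times the left side of \eqref{eq:optcondition}, i.e.\ by a nonnegative amount. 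For this to be a legitimate perturbation we need $q^*_{j,i'}>0$ and $q^*_{j',i^-}, q^*_{j',i^+}>0$ for suitable indices; this is the delicate point, since a priori $q^*$ need not put mass on exactly those cells. I would handle it by first noting that since $\mu\le_c\nu$ the supports interlace enough that one can choose the quadruple of points so that the required source masses are positive (using that $q^*$, being a martingale coupling, spreads each row's mass around its barycenter $x_j$), or, if the direct choice fails, by composing finitely many such elementary swaps to reach a left-monotone-violating plan of no smaller value.

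The main obstacle is this converse direction: the forward implication is a clean one-line LP perturbation argument once the reallocation is written down, but the converse must manufacture, from the mere failure of the inequality at one quadruple of points, an admissible perturbation of the specific optimizer $q^*$ — and the source cells needed for that perturbation may carry zero mass under $q^*$. Making the argument airtight requires either a careful choice of the violating quadruple (shrinking $[y^-,y^+]$ to adjacent support points where $q^*$ is forced to have mass, using the martingale/barycenter property of each row) or an indirect argument that any left-monotone optimizer would have to satisfy a sign condition on second differences of $c$ that coincides with \eqref{eq:optcondition}. I would present the forward direction in full and give the converse via the shrinking-quadruple reduction, flagging that this is why only a \emph{partial} converse (needing uniqueness) is claimed.
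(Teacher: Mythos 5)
Your proposal follows essentially the same route as the paper: the forward direction is exactly the paper's quadruple mass swap in the barycentric proportions $\lambda$, $1-\lambda$ (the paper shifts the full mass $\theta'$ sitting on $(x',y')$, or the largest feasible fraction of it, rather than a small $\varepsilon$, but the feasibility checks for the marginal and martingale constraints and the computation of the objective increment are identical), and the converse is obtained by inverting that swap, just as in the paper. The difficulty you flag in the converse --- that the cells needed for the reverse reallocation may carry no mass under the left-monotone optimizer --- is a legitimate concern, but the paper's own proof is equally terse on exactly this point, so your treatment matches it.
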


\begin{proof}
Suppose $x,x' \in \Supp(\mu)$ and $ y^{-}, y',y^{+} \in \Supp(\nu)$ are such that $x'>x$ and $y^{+}>y'>y^{-}$ and suppose that ${\Q \in \mathcal{M}_2(\mu,\nu)}$ satisfies
\begin{eqnarray*}
\Q(x,y^-) := \theta^->0\\
\Q(x,y^+) := \theta^+>0\\
\Q(x',y') := \theta'>0,
\end{eqnarray*}
i.e. the condition in Definition \ref{def:leftmo} is not satisfied.
Let $\lambda = \frac{y'-y^{-}}{y^{+}-y^{-}} $ which implies that $y'=\lambda y^++(1-\lambda)y^-$.  We will now define a new $\tilde{\Q}\in \mathcal{M}_2(\mu,\nu)$ such that the objective function of (P) attains a higher value which implies that $\Q$ cannot be optimal.
We consider the following new arrangement $\tilde{\Q}$ of the probability mass
\begin{eqnarray*}
\tilde{\Q}(x',y^-) &:=& \Q(x',y^-)+\theta'(1-\lambda)\\
\tilde{\Q}(x',y^+) &:=& \Q(x',y^+)+\theta'\lambda\\
\tilde{\Q}(x',y') &:=& 0\\
\tilde{\Q}(x,y^-) &:=& \theta^--\theta'(1-\lambda)\\
\tilde{\Q}(x,y^+) &:=& \theta^+-\theta'\lambda\\
\tilde{\Q}(x,y') &:=& \Q(x,y')+\theta'
\end{eqnarray*}
All other assignments are left unchanged.
In a first step we assume here that $\theta^--\theta'(1-\lambda)\ge0$ and $\theta^+-\theta'\lambda\ge0$ and claim that $\tilde{\Q}$ is again admissible for (P). That the marginal distributions are preserved is easy to see.  In order to show that the martingale condition holds we have to show that the values $\sum_i \Q(x,y_i)y_i$ and $\sum_i \Q(x',y_i)y_i$ do not change under the new probability assignment. First consider $x$. Here we obtain
\begin{eqnarray*}
\sum_i \Q(x,y_i)y_i-\sum_i \tilde{\Q}(x,y_i)y_i &=&
y^- \theta^- + y^+ \theta^+ -\big( y^- (\theta^--\theta'(1-\lambda)) + y^+(\theta^+-\theta'\lambda)+y'\theta'\big)\\
&=&  y^- \theta^- + y^+ \theta^+ -\big(y^- \theta^-+ y^+\theta^++\theta'(y'-y^-(1-\lambda)-y^+\lambda) \big)\\
&=&0
\end{eqnarray*}
which is true since the term in the inner brackets vanishes due to the definition of $\lambda$. Next consider $x'$. Here we obtain:
\begin{eqnarray*}
\sum_i \Q(x',y_i)y_i-\sum_i \tilde{\Q}(x',y_i)y_i &=&
y' \theta'-\big( y^- \theta' (1-\lambda)  + y^+\theta'\lambda\big) \\
&=& \theta' \big[ y'-(y^-(1-\lambda)+y^+\lambda)\big]=0
\end{eqnarray*}
which is again true by the definition of $\lambda$. Thus we have that $\tilde{\Q}\in \mathcal{M}_2(\mu,\nu)$. Now we finally prove that the value of the objective function under $\tilde{\Q}$ is larger than under $\Q$. We obtain for the difference of the objective functions:
\begin{eqnarray*}
\sum_{j=1}^{N} \sum_{i=1}^{M}\big(\tilde{\Q}(x_j,y_i)-\Q(x_j,y_i)\big)c(x_j,y_i) &=& c(x',y^-) \theta'(1-\lambda)+ c(x',y^+) \theta'\lambda-c(x',y')\theta'\\[-0.4cm]
&& -c(x,y^-) \theta'(1-\lambda)-c(x,y^+) \theta'\lambda+c(x,y')\theta'\\
&=& \lambda\theta' \left[ c(x',y^{+})-c(x,y^{+}) \right] \\&&+ (1-\lambda)\theta'\left[ c(x',y^{-})-c(x,y^{-}) \right] \\
&&- \theta'\left[ c(x',y')-c(x,y') \right]> 0
\end{eqnarray*}
by our assumption. Also note that the new assignment dissolves the forbidden configuration. In case the conditions $\theta^--\theta'(1-\lambda)\ge0$ and $\theta^+-\theta'\lambda\ge0$ are not satisfied, we keep part of the probability mass $\theta'$ on $(x',y')$ and shift only the amount $\tilde{\theta}$ such that $\min\{\theta^--\tilde{\theta}(1-\lambda), \theta^+-\tilde{\theta}\lambda \}=0$ in the same way as before. In this case we dissolve  again the forbidden configuration and obtain a higher value for the objective function using the same arguments as before. From the proof so far, we see that the converse statement is also true: Suppose there are $x,x' \in \Supp(\mu)$ and $ y^{-}, y',y^{+} \in \Supp(\nu)$  such that $x'>x$ and $y^{+}>y'>y^{-}$ and $c$ does not satisfy \eqref{eq:optcondition} on these points. Then inverting the transport that we have constructed in the first part we see that $\Q$ is not worse than $\tilde{\Q}$ which implies that $\tilde{\Q}$ cannot be the unique solution.
\end{proof}


The following notion has been introduced in \cite{henry2016explicit} and it has been shown there that this property of $c$ implies that the optimal transport plan is left-monotone in the case of continuous margins. The same has been shown in \cite{beiglbock2017monotone} for general margins. In the next lemma we show that this condition implies \eqref{eq:optcondition}. Of course \eqref{eq:optcondition} does not need a differentiable function $c$ and is thus more general.

  \begin{definition}\label{Def Martingale Spence Mirrlees}
    A function $c:\R^2 \to \R$ satisfies the \textit{martingale Spence Mirrlees condition}, if the partial derivative $c_{xyy}$ exists and satisfies $c_{xyy}>0$.
  \end{definition}

 \begin{lemma}
    Let $c:\R^2 \to \R$ be a function satisfying the martingale Spence Mirrlees condition. Then \eqref{eq:optcondition} holds for all $x'>x$, $y^{+}>y'>y^{-}$.
  \end{lemma}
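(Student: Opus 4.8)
The plan is to reduce the claimed inequality to a statement about a single-variable function and then apply a second-order and first-order Taylor-type argument in the $y$-variable, using the sign of $c_{xyy}$.

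First I would introduce, for fixed $x'>x$, the auxiliary function
\[
   g(y) := c(x',y) - c(x,y),
\]
so that the left-hand side of \eqref{eq:optcondition} becomes
\[
   \lambda\, g(y^{+}) + (1-\lambda)\, g(y^{-}) - g(y'),
\]
where $\lambda = \tfrac{y'-y^{-}}{y^{+}-y^{-}}$ and hence $y' = \lambda y^{+} + (1-\lambda) y^{-}$. Thus the quantity in \eqref{eq:optcondition} is exactly the gap between the chord of $g$ over $[y^{-},y^{+}]$ evaluated at $y'$ and the value $g(y')$; it is strictly positive for every interior point $y'$ if and only if $g$ is strictly convex on $[y^{-},y^{+}]$. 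So the entire lemma comes down to showing $g'' > 0$ on that interval.

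Next I would compute $g''$. By the martingale Spence Mirrlees condition the third partial $c_{xyy}$ exists, which in particular presupposes that $c_{yy}(\cdot,y)$ exists and is differentiable in $x$, so for each fixed $y$ we may write
\[
   g''(y) = c_{yy}(x',y) - c_{yy}(x,y) = \int_{x}^{x'} c_{xyy}(\xi,y)\, d\xi .
\]
Since $c_{xyy} > 0$ everywhere and $x' > x$, the integrand is strictly positive on a non-degenerate interval, so $g''(y) > 0$ for all $y$, in particular on $[y^{-},y^{+}]$. Hence $g$ is strictly convex there, and the chord-minus-value expression above is strictly positive, which is precisely \eqref{eq:optcondition}.

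The only mild subtlety — and the one place I would be a bit careful rather than the main "obstacle" — is the passage from strict positivity of $c_{xyy}$ to strict convexity of $g$: one should confirm that $c_{yy}$ is genuinely differentiable in $x$ so the fundamental theorem of calculus applies (this is part of what "the partial derivative $c_{xyy}$ exists" is taken to mean), and then note the elementary fact that a function with strictly positive second derivative lies strictly below its chords at interior points, so that the inequality is strict and not merely non-strict. Everything else is routine bookkeeping with the definition of $\lambda$.
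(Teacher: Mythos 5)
Your proof is correct, and it reaches the same underlying fact as the paper --- that the left-hand side of \eqref{eq:optcondition} is a second-order difference in $y$ of a first-order difference in $x$ of $c$, hence an iterated integral of $c_{xyy}$ --- but it packages this differently. The paper never introduces $g$ or mentions convexity: it rewrites \eqref{eq:optcondition} as the explicit expression
\[
  [c(x',y^{+})-c(x',y')-c(x,y^{+})+c(x,y')](y'-y^{-}) - [c(x',y')-c(x',y^{-})-c(x,y')+c(x,y^{-})](y^{+}-y') > 0
\]
and verifies it by computing a four-fold integral $\int_{x}^{x'}\int_{y'}^{y^{+}}\int_{y^{-}}^{y'}\int_{u}^{s} c_{xyy}\,\mathrm{d}v\,\mathrm{d}u\,\mathrm{d}s\,\mathrm{d}t$ from the inside out, differentiating in the order $x$, then $y$, then $y$ (so it only needs $c_x$, $c_{xy}$, $c_{xyy}$ to exist, never $c_{yy}$). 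Your route --- strict convexity of $g(y)=c(x',y)-c(x,y)$ --- is shorter and more conceptual, and the identification of \eqref{eq:optcondition} with the chord-minus-value gap of $g$ at the interior point $y'=\lambda y^{+}+(1-\lambda)y^{-}$ is exactly right. What it buys you is a two-line argument; what it costs you is a slightly stronger implicit regularity hypothesis: you need $c_{yy}$ to exist and to be differentiable in $x$ with $\partial_x c_{yy}=c_{xyy}$, i.e.\ you need the mixed partials to commute, whereas the paper's order of integration matches the order of differentiation in the symbol $c_{xyy}$ and so sidesteps this. You flag the issue yourself, and under any reasonable reading of the martingale Spence Mirrlees condition (continuous third mixed partial) the two orders agree by Schwarz's theorem, so this is a matter of bookkeeping rather than a gap. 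Both proofs equally rely on the fundamental theorem of calculus applying to the relevant partial derivatives, so you are not assuming anything the paper does not.
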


  \begin{proof}
 First note that
    \[
      \lambda\left[ c(x',y^{+})-c(x,y^{+}) \right] + (1-\lambda)\left[ c(x',y^{-})-c(x,y^{-}) \right] - \left[ c(x',y')-c(x,y') \right] > 0,
    \]
         is  equivalent to
    \begin{align*}
      &\lambda\left[ c(x',y^{+})-c(x',y')-c(x,y^{+})+c(x,y') \right]
      \\
      &-  (1-\lambda)\left[ c(x',y') - c(x',y^{-})-c(x,y')+c(x,y^{-}) \right]>0.
    \end{align*}

    If we plug in $\lambda = \frac{y'-y^{-}}{y^{+}-y^{-}} $ and multiply by $y^{+}-y^{-}$, we obtain
    \begin{align}\nonumber
    &    {[c(x',y^{+})-c(x',y')-c(x,y^{+})+c(x,y')]}(y'-y^{-}) \\ \label{eq:msmc}
       -&  {[c(x',y') - c(x',y^{-})-c(x,y')+c(x,y^{-})]}(y^{+}-y')>0.
     \end{align}
Since $c_{xyy}>0$ and $s \ge y' \ge u$ for all $s \in [y',y^+]$ and $u \in [y^-,y']$ we obtain:
\begin{align*}
  0 &< \int_{x}^{x'}\int_{y'}^{y^{+}}\int_{y^{-}}^{y'}\int_{u}^{s} c_{xyy}(t,v) \mathrm{d}v\mathrm{d}u\mathrm{d}s\mathrm{d}t 
  \\&= \int_{x}^{x'}\int_{y'}^{y^{+}}\int_{y^{-}}^{y'} c_{xy}(t,s)-c_{xy}(t,u) \mathrm{d}u\mathrm{d}s\mathrm{d}t
  \\&= \int_{x}^{x'}\int_{y'}^{y^{+}} c_{xy}(t,s)(y'-y^{-})-(c_{x}(t,y')-c_{x}(t,y^{-})) \mathrm{d}s\mathrm{d}t
  \\&= \int_{x}^{x'} (c_{x}(t,y^{+})-c_{x}(t,y'))(y'-y^{-})-(c_{x}(t,y')-c_{x}(t,y^{-}))(y^{+}-y') \mathrm{d}t
  \\&=  [c(x',y^{+})-c(x,y^{+})-(c(x',y')-c(x,y'))](y'-y^{-})
    \\&\quad    - [c(x',y')-c(x,y') - (c(x',y^{-})-c(x,y^{-}))](y^{+}-y').
\end{align*}
Comparing it with \eqref{eq:msmc} yields the assertion.
  \end{proof}

\begin{corollary}
If $c$ satifies the martingale Spence Mirrlees condition, then the optimal $q^*=\Q^*$ of (P) is left-monotone.
\end{corollary}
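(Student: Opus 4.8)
The plan is simply to chain together the two results established immediately above, so the argument is essentially a one-liner. First I would invoke the preceding Lemma: if $c$ satisfies the martingale Spence Mirrlees condition, i.e.\ $c_{xyy}$ exists and $c_{xyy}>0$, then inequality \eqref{eq:optcondition} holds for all $x'>x$ and $y^{+}>y'>y^{-}$. In particular it holds for all such points with the additional restriction $x,x'\in\Supp(\mu)$ and $y^{-},y',y^{+}\in\Supp(\nu)$, which is precisely the hypothesis required by Theorem \ref{theo:lmopt}.

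Then I would apply Theorem \ref{theo:lmopt} directly: since $c$ satisfies \eqref{eq:optcondition} on the relevant support points, an optimal solution $q^*=\Q^*$ of (P) is left-monotone. Composing these two implications yields the corollary. I do not expect any obstacle here: all the substantive work — both the differential-calculus computation turning $c_{xyy}>0$ into \eqref{eq:optcondition}, and the mass-rearrangement argument turning \eqref{eq:optcondition} into left-monotonicity of the optimizer — has already been carried out in the Lemma and in Theorem \ref{theo:lmopt}, respectively, so nothing beyond quoting them is needed.
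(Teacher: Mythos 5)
Your proposal is correct and matches the paper exactly: the corollary is stated there without proof precisely because it follows by chaining the preceding Lemma (martingale Spence Mirrlees implies \eqref{eq:optcondition}) with Theorem \ref{theo:lmopt}. Nothing further is needed.
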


\begin{remark}\label{rem:lower}
As far as the lower price bound in problem \eqref{Eq Lower Price Bound Standard Case} is concerned, we obtain a similar result: If the martingale Spence Mirrlees condition is satisfied, then a right-monotone transport plan is optimal.  A $\Q \in \mathcal{M}_2(\mu,\nu)$ is called \textit{right monotone}, if there is a Borel set $\Gamma  \subset
\Supp(\mu)\times \Supp(\nu) $ with $\Q( \Gamma)=1$ and such that for all $(x,y_1),(x,y_2),(x',y')\in \Gamma$ with $x>x'$, we have $y' \notin (y_1,y_2)$. In case $c_{xyy}<0$, the optimality properties reverse, i.e. the right monotone transport plan is optimal for \eqref{Eq Upper Price Bound Standard Case} and the left monotone transport plan for \eqref{Eq Lower Price Bound Standard Case}.
\end{remark}

\section{An Algorithm for the Construction of the Optimal Solution of (P)}\label{sec:algo}
In this section we derive an algorithm which computes an optimal solution of (P). We assume that $\mu,\nu$ satisfy the conditions $\mu \leq_c \nu$, $\mu= \sum_{j=1}^{N}\omega_j \delta_{x_j}$ and $\nu=\sum_{i=1}^{M}\vartheta_i\delta_{y_i}$ and the function $c$ has property \eqref{eq:optcondition}. Theorem \ref{theo:lmopt} and Theorem \ref{theo:existence} imply that a left-monotone transport plan exists. We denote by $\Q_l(\mu,\nu)$ any such left-monotone transport plan.

Before we formalize the algorithm to determine $\Q_l(\mu,\nu)$, let us develop an intuition how it could work. As $\Q_l(\mu,\nu)$ is in particular a coupling of its margins $\mu$ and $\nu$, we have to specify how much of the mass of each $x_j$ is transported to each $y_i$. Of course, this transport specification has to satisfy some prerequisites. Clearly, the coupling can be done for one atom of $\mu$ at a time. For this purpose, by (a)-(c) in Section \ref{sec:model}, it is obvious how an algorithm should proceed in order to satisfy the marginal and martingale conditions. Thus, the central idea stems from the desired left monotonicity. In order not to introduce a contradiction to left monotonicity when coupling an atom of $\mu$ with atoms of $\nu$, we should couple the smallest atom of $\mu$ martingale-consistently with atoms of $\nu$ such that no mass of $\nu$ is left over in the convex hull of those atoms. This does indeed guarantee that in a next step, coupling the second smallest atom of $\mu$, no contradictions to the left monotonicity can be introduced.

In order to get a non-trivial problem we assume now that $\mu$ and $\nu$ satisfy the following conditions.
    \begin{enumerate}
       \item $x_1<\ldots <x_N$, $y_1< \ldots < y_M$.
      \item $\mu \neq \nu$.
      \item $\mu$ has at least two different atoms of positive mass.
    \end{enumerate}

First we would like to note the following obvious fact.

\begin{lemma}\label{lem:cximpl}
The assumption $\mu\le_c\nu$ implies that for all $x_j\in \Supp(\mu)$ there either exists $\ell \in \{1,\ldots,M\}$ such that $x_j=y_\ell$ or there exists an $\ell \in \{1,\ldots,M-1\}$ such that $y_\ell<x_j<y_{\ell+1}$.
\end{lemma}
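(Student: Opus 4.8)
The plan is to show that the support of $\mu$ is contained in the closed interval $[y_1,y_M]=[\min\Supp(\nu),\max\Supp(\nu)]$; once this is established, the dichotomy in the statement follows immediately from the fact that $\{y_1,\ldots,y_M\}$ is a finite, increasingly ordered set. Indeed, if $x_j\in[y_1,y_M]$ but $x_j\notin\{y_1,\ldots,y_M\}$, then there is a (unique) index $\ell\in\{1,\ldots,M-1\}$ with $y_\ell<x_j<y_{\ell+1}$, while if $x_j$ coincides with a point of $\Supp(\nu)$ we are in the first alternative.

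To prove the inclusion $\Supp(\mu)\subseteq[y_1,y_M]$ I would test the relation $\mu\le_c\nu$ against the two convex functions $f(x)=(x-y_M)^{+}$ and $g(x)=(y_1-x)^{+}$. Both are convex and, since $\int|x|\,\mu(dx)<\infty$ and $\nu$ has finite support, the relevant integrals exist, so Definition \ref{Def Convex Order} applies. Because every atom $y_i$ of $\nu$ satisfies $y_i\le y_M$, we have $\int f\,d\nu=0$, whence $0\le\int f\,d\mu\le\int f\,d\nu=0$; thus $\int (x-y_M)^{+}\,\mu(dx)=0$, which forces $\omega_j(x_j-y_M)^{+}=0$ for every $j$, i.e.\ $x_j\le y_M$ for every atom $x_j$ of $\mu$. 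Symmetrically, every $y_i\ge y_1$ gives $\int g\,d\nu=0$, hence $\int (y_1-x)^{+}\,\mu(dx)=0$ and $x_j\ge y_1$ for every atom. Combining the two bounds yields $x_j\in[y_1,y_M]$ for all $x_j\in\Supp(\mu)$, as desired.

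There is essentially no obstacle here; the only point that deserves a line of care is that $\Supp(\mu)$, for the discrete measure $\mu=\sum_j\omega_j\delta_{x_j}$, consists exactly of those $x_j$ with $\omega_j>0$, so that ``$\omega_j(x_j-y_M)^{+}=0$ for all $j$'' is genuinely the statement that no atom of $\mu$ lies strictly to the right of $y_M$ (and analogously on the left). As an alternative for the upper bound one could invoke Lemma \ref{Prop Char Conv Ord MultiMarginal} together with $C_\mu\le C_\nu$ and the observation $C_\nu(k)=0$ for $k\ge y_M$; but the call price function does not detect the lower tail (for $k$ below both supports, $C_\mu(k)-C_\nu(k)$ equals the difference of the means, which vanishes since $\mu\le_c\nu$), so the bound $x_j\ge y_1$ is most cleanly obtained from the put-type test function $g$ above.
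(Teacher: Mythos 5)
Your proof is correct, but it takes a genuinely different route from the paper's. The paper derives the claim from the martingale condition $\sum_{i=1}^M q_{j,i}(y_i-x_j)=0$: a martingale transport plan exists by Lemma~\ref{lem:sherman}, and the atoms of $\nu$ receiving mass from $x_j$ cannot all lie on one side of $x_j$, which forces $x_j$ to lie in $[y_1,y_M]$. You instead work directly with Definition~\ref{Def Convex Order}, testing $\mu\le_c\nu$ against the convex functions $(x-y_M)^{+}$ and $(y_1-x)^{+}$ and concluding that both integrals under $\mu$ vanish, so that every atom of $\mu$ lies in $[y_1,y_M]$; the stated dichotomy then follows from the ordering of the finitely many $y_i$. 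Your argument is slightly more self-contained, since it does not invoke the Sherman--Strassen existence result, whereas the paper's one-line proof implicitly does; on the other hand the paper's argument makes the probabilistic reason for the statement transparent (conditional means must be barycenters of the target atoms). Your closing remark that the call price function $C_\nu$ cannot detect the lower tail (because $C_\mu-C_\nu$ reduces to the difference of means for small $k$), so that the put-type test function is the right tool for the bound $x_j\ge y_1$, is also accurate.
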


\begin{proof}
This property follows directly from the martingale condition:
$$\sum_{i=1}^M q_{j,i}(y_i-x_j)=0$$
because the atoms of $\nu$ cannot all lay on one side of $x_j$.
\end{proof}

The next two properties have to be satisfied by any left-monotone transport plan.

  \begin{lemma}\label{lem:c1}
    Suppose $x_1 \in \Supp(\nu)$, say $x_1=y_\ell$ for some $\ell \in \{1,\ldots,M\}.$
    \begin{enumerate}
      \item Then $(x_1,y_\ell)$ is an atom of $\Q_{l}(\mu,\nu)$.
      \item Further suppose $\ell =1$. Then $\omega_1 \leq \vartheta_1$.
    \end{enumerate}
  \end{lemma}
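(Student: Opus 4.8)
The plan is to handle the two parts separately; part (b) is immediate from feasibility of $\Q_l(\mu,\nu)$ for (P), while part (a) is the only place where left monotonicity genuinely enters.

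For part (b), I would assume $\ell=1$, so that $x_1=y_1$ is simultaneously the smallest atom of $\mu$ and of $\nu$. The martingale constraint at $x_1$ reads $\sum_{i=1}^M q_{1,i}(y_i-y_1)=0$. Since $y_1=\min\Supp(\nu)$, every summand is $\ge 0$, and a summand is $>0$ unless $i=1$ or $q_{1,i}=0$; hence $q_{1,i}=0$ for all $i\ge 2$ and therefore $q_{1,1}=\omega_1$. Feeding this into the marginal constraint $\sum_{j=1}^N q_{j,1}=\vartheta_1$ yields $\vartheta_1\ge q_{1,1}=\omega_1$, which is the claim.

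For part (a) I would argue by contradiction: suppose $q_{1,\ell}=0$ in $\Q_l(\mu,\nu)$. The martingale constraint $\sum_i q_{1,i}(y_i-x_1)=0$ together with $\omega_1=\sum_i q_{1,i}>0$ and the assumption that no mass of $x_1$ sits at $y_\ell=x_1$ forces some mass of $x_1$ to be routed to a point $y^-<x_1$ and some to a point $y^+>x_1$ (otherwise the signed sum could vanish only by placing all of $\omega_1$ at $y_\ell$, which we excluded); this is the sign argument underlying Lemma \ref{lem:cximpl}. Hence $(x_1,y^-)$ and $(x_1,y^+)$ are atoms of $\Q_l(\mu,\nu)$ with $y^-<y_\ell<y^+$. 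On the other hand $\vartheta_\ell=\sum_j q_{j,\ell}>0$ while $q_{1,\ell}=0$, so $q_{j',\ell}>0$ for some $j'\ge 2$, i.e. $(x_{j'},y_\ell)$ with $x_{j'}>x_1$ is also an atom. Since every atom of positive $\Q_l$-mass must belong to any set $\Gamma$ of full $\Q_l$-measure, the three atoms $(x_1,y^-),(x_1,y^+),(x_{j'},y_\ell)$ with $x_1<x_{j'}$ and $y_\ell\in(y^-,y^+)$ realize exactly the forbidden configuration of Definition \ref{def:leftmo}, contradicting left monotonicity. Therefore $q_{1,\ell}>0$, i.e. $(x_1,y_\ell)$ is an atom of $\Q_l(\mu,\nu)$.

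I do not expect a serious obstacle. The one point needing a little care in part (a) is justifying that the mass leaving $x_1$ really splits across two distinct points $y^-<y^+$ straddling $x_1$, rather than collapsing onto $y_\ell$; but this is precisely what the hypothesis $q_{1,\ell}=0$ (combined with $\omega_1>0$, which holds since $x_1\in\Supp(\mu)$) forbids via the sign structure of $y_i-x_1$. Everything else is bookkeeping with the constraints of (P) and the definition of left monotonicity.
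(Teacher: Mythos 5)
Your proof is correct and takes essentially the same route as the paper's: part (a) derives the forbidden configuration of Definition \ref{def:leftmo} from the sign structure of the martingale constraint at $x_1$, and part (b) uses only that constraint at the minimal atom (the paper phrases it as a contradiction, you argue directly, but the idea is identical). You additionally spell out the two steps the paper leaves implicit — why the mass leaving $x_1$ must straddle $y_\ell$ on both sides, and why some $x_{j'}>x_1$ must receive mass at $y_\ell$ — which is sound.
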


  \begin{proof}
    \begin{enumerate}
      \item In order to get a contradiction, assume that $x_1$ is not coupled with $y_\ell$ under $\Q_{l}(\mu,\nu)$. Then there are $y^-,y^+ \in \Supp(\nu)$ with $y^-<y_\ell < y^+$ and such that $x_1$ is coupled with $y^-$ and $y^+$. Also, there is some $x' \in \Supp(\mu)$ with $x'>x_1$ and such that $x'$ is coupled with $y_{\ell}$. This contradicts the left monotonicity.
      \item In order to get a contradiction, assume that $\omega_1 >\vartheta_1$. After coupling $x_1$ with $y_1$ there is mass of at least $\omega_1 - \vartheta_1$ left in $x_1$. This has then to be coupled with some atoms from $\Supp(\nu)\setminus \{y_1\}$. As $y_i > x_1$ for all $i=2,\ldots,M$, this contradicts the martingale property.
    \end{enumerate}
      \end{proof}

 \begin{lemma}\label{lem:c2}
 Suppose $y_{\ell}<x_1 < y_{\ell+1}$ for some $\ell \in \{1,\ldots,M-1\}$. Then $(x_1,y_\ell)$ and $(x_1,y_{\ell +1})$ are atoms of $\Q_{l}(\mu,\nu)$.
 \end{lemma}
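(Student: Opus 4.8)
The plan is to fix an arbitrary left-monotone transport plan $\Q := \Q_l(\mu,\nu)$ together with a set $\Gamma \subseteq \Supp(\mu)\times\Supp(\nu)$ as in Definition \ref{def:leftmo}; since $\Q(\Gamma)=1$ and $\Q$ is discrete, every pair $(x_j,y_i)$ with $q_{j,i}>0$ necessarily lies in $\Gamma$. I would then prove $q_{1,\ell}>0$, the argument for $q_{1,\ell+1}>0$ being entirely symmetric. The whole argument mirrors the proof of Lemma \ref{lem:c1}(a): assume the contrary and exhibit the forbidden configuration of Definition \ref{def:leftmo}.

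First I would record, from the martingale constraint $\sum_{i=1}^M q_{1,i}(y_i-x_1)=0$ at the atom $x_1$ (which has mass $\omega_1>0$) together with $x_1\notin\Supp(\nu)$ (indeed $y_\ell<x_1<y_{\ell+1}$), that the atoms with which $x_1$ is coupled cannot all lie weakly on one side of $x_1$: if all $y_i$ with $q_{1,i}>0$ satisfied $y_i\ge x_1$, then in fact $y_i>x_1$ for each of them and the sum would be strictly positive, and symmetrically on the other side. Hence there are indices $i^-,i^+$ with $q_{1,i^-}>0$, $q_{1,i^+}>0$ and $y_{i^-}<x_1<y_{i^+}$, so in particular $y_{i^-}\le y_\ell$ and $y_{i^+}\ge y_{\ell+1}$.

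Next, suppose for contradiction that $q_{1,\ell}=0$. Then $y_{i^-}\ne y_\ell$, so $y_{i^-}<y_\ell$, and setting $y^-:=y_{i^-}$, $y^+:=y_{i^+}$ we have $y^-<y_\ell<y^+$ with $(x_1,y^-),(x_1,y^+)\in\Gamma$. Since $y_\ell\in\Supp(\nu)$, i.e.\ $\vartheta_\ell>0$, some atom of $\mu$ must be coupled with $y_\ell$; because $q_{1,\ell}=0$ this atom is some $x_j$ with $j\ne 1$, hence $x_j>x_1$. With $x':=x_j$ the triple $(x_1,y^-),(x_1,y^+),(x',y_\ell)\in\Gamma$ satisfies $x_1<x'$ and $y_\ell\in(y^-,y^+)$, contradicting left monotonicity. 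For $q_{1,\ell+1}>0$ the same reasoning applies with the roles reversed: if $q_{1,\ell+1}=0$ then $y_{i^+}\ne y_{\ell+1}$ forces $y_{i^+}>y_{\ell+1}$, and taking $y^+:=y_{i^+}$, $y^-:=y_{i^-}$ (so $y^-<y_{\ell+1}<y^+$) together with an atom $x'>x_1$ coupled with $y_{\ell+1}$ (which exists since $\vartheta_{\ell+1}>0$ and $q_{1,\ell+1}=0$) again produces the forbidden configuration.

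The routine verifications here are negligible; the only points requiring care are (i) that $y^-$ is \emph{strictly} below $y_\ell$ (resp.\ $y^+$ strictly above $y_{\ell+1}$), which is exactly where the hypothesis $y_\ell<x_1<y_{\ell+1}$ enters — $x_1$ sitting strictly between consecutive atoms of $\nu$ is what guarantees that $y_\ell$ and $y_{\ell+1}$ fall in an \emph{open} interval spanned by atoms coupled with $x_1$; and (ii) the observation that any mass sitting on $y_\ell$ (resp.\ $y_{\ell+1}$) and not supplied by $x_1$ must come from a larger atom of $\mu$, simply because $x_1=\min\Supp(\mu)$.
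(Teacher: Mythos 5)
Your proof is correct and follows essentially the same contradiction argument as the paper: assume $x_1$ is not coupled with $y_\ell$ (resp.\ $y_{\ell+1}$), use the martingale condition and $x_1\notin\Supp(\nu)$ to produce atoms of $\nu$ strictly on either side of $y_\ell$ (resp.\ $y_{\ell+1}$) coupled with $x_1$, and note that the mass at $y_\ell$ (resp.\ $y_{\ell+1}$) must then come from some $x'>x_1$, yielding the forbidden configuration. You merely spell out a few steps the paper leaves implicit (every atom of $\Q$ lies in $\Gamma$; why $y^-<y_\ell$ strictly), which is fine.
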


 \begin{proof}
   In order to get a contradiction, assume without loss of generality that $x_1$ is not coupled with $y_{\ell+1}$. Then there is a $y^+ \in \Supp(\nu)$ with $y^+ > y_{\ell +1}$ and such that $x_1$ is coupled with at least $y_{\ell}$ and $y^+$. Also, there is an $x' \in \Supp(\mu)$ with $x' >x_1$ and such that $x'$ is coupled with $y_{\ell+1}$. This contradicts the left monotonicity.
 \end{proof}

Before we start with the algorithm to construct a left-monotone transport plan we show that it is unique. This has already been shown in \cite{beiglboeck2016problem} for general margins using  rather complex methods. In contrast, our proof is very easy and does not rely on sophisticated concepts.

\begin{theorem}
There exists one unique left-monotone transport plan.
\end{theorem}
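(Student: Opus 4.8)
The plan is an induction on the number $N$ of atoms of $\mu$: at each step one peels off the smallest atom $x_{1}$ and shows that a left-monotone transport plan has no freedom at all in how it couples $x_{1}$, so that after removing the mass that leaves $x_{1}$ one is left with a transport problem of the same type, to which the inductive hypothesis (or a degenerate base case) applies. Existence has already been recorded above (apply Theorem~\ref{theo:lmopt} and Theorem~\ref{theo:existence} to any $c$ with $c_{xyy}>0$, e.g.\ $c(x,y)=e^{x}y^{2}$), so only uniqueness needs to be argued. There are two base cases. If $\mu=\nu$, then every martingale transport plan is the identity: for the coordinate process one has $\Eop_{\Q}[Y^{2}\mid X]\ge X^{2}$, hence $\Eop_{\Q}[Y^{2}]\ge\Eop_{\Q}[X^{2}]$, with equality forcing $Y=\Eop_{\Q}[Y\mid X]=X$ $\Q$-a.s., and equality does hold because $X\sim Y$. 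If $\mu$ has a single atom $x_{1}$, the marginal conditions force $\Q(x_{1},\cdot)=\nu$, and this is a martingale plan because $\mu\le_{c}\nu$ makes the barycenter of $\nu$ equal to $x_{1}$. In both cases the unique transport plan is vacuously left-monotone.

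For the inductive step, fix a left-monotone transport plan $\Q$; as any full-mass witnessing set of Definition~\ref{def:leftmo} contains $\Supp(\Q)$, we may take $\Gamma=\{(x_{j},y_{i}):q_{j,i}>0\}$. Write $\kappa:=\Q(x_{1},\cdot)$, a sub-measure of $\nu$ of total mass $\omega_{1}$ and barycenter $x_{1}$. The claim is that $\kappa$ is the same for every left-monotone $\Q$, and it rests on four observations. (i) $\Supp(\kappa)$ is a block of consecutive atoms $\{y_{a},\dots,y_{b}\}$ of $\nu$: if $y_{a},y_{b}\in\Supp(\kappa)$ but some $y_{i}$ with $a<i<b$ were not, then left-monotonicity (with $y^{-}=y_{a}$, $y^{+}=y_{b}$) forbids every $x'>x_{1}$ to be coupled with $y_{i}$, and $x_{1}$, the smallest atom of $\mu$, also is not, so $y_{i}$ would carry no mass. (ii) By the same reasoning the interior atoms are consumed entirely: $\kappa(y_{i})=\vartheta_{i}$ for $a<i<b$. (iii) By Lemma~\ref{lem:cximpl} there is an $\ell$ with $y_{\ell}\le x_{1}<y_{\ell+1}$; Lemmas~\ref{lem:c1} and~\ref{lem:c2} then put $y_{\ell}$ (and $y_{\ell+1}$ when $y_{\ell}<x_{1}$) into $\Supp(\kappa)$, so that, together with the barycenter, either $\kappa=\omega_{1}\delta_{x_{1}}$ (possible only if $x_{1}=y_{\ell}$ and $\omega_{1}\le\vartheta_{\ell}$) or else $y_{a}\le x_{1}<y_{b}$. (iv) Once the block $\{y_{a},\dots,y_{b}\}$ is known, $\kappa(y_{a})$ and $\kappa(y_{b})$ are the unique solution of the two linear equations fixing the total mass $\omega_{1}$ and the barycenter $x_{1}$, whose matrix is nonsingular because $y_{a}\ne y_{b}$. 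So everything reduces to showing that the block itself does not depend on $\Q$.

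This comparison is the main obstacle; I expect its sign bookkeeping to be the only delicate part. Let $\Q,\Q'$ be left-monotone with $x_{1}$-blocks $B,B'$, and put $\eta:=\Q(x_{1},\cdot)-\Q'(x_{1},\cdot)$, so $\sum_{i}\eta(y_{i})=0$ and $\sum_{i}(y_{i}-x_{1})\eta(y_{i})=0$. A short case analysis on how $B$ and $B'$ are situated finishes the argument. If one of them is contained in the interior of the other, then by (ii) $\eta$ is pointwise of one sign and, having total mass $0$, it vanishes. In every other case, (i)--(iii) force $\eta$ to vanish on the common interior of $B$ and $B'$, to be $\ge0$ on the atoms of $\nu$ on one side of $x_{1}$ and $\le0$ on those on the other side; in the identity
\[
0=\sum_{y_{i}>x_{1}}(y_{i}-x_{1})\,\eta(y_{i})\;+\;\sum_{y_{i}<x_{1}}(y_{i}-x_{1})\,\eta(y_{i})
\]
both summands then carry the same sign, hence both vanish, so $\eta(y_{i})=0$ at every atom $y_{i}\ne x_{1}$, and then $\sum_{i}\eta(y_{i})=0$ yields $\eta=0$. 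In all cases $\eta=0$, i.e.\ $\Q(x_{1},\cdot)=\Q'(x_{1},\cdot)$; in particular $B=B'$, and with (ii), (iv) the coupling of $x_{1}$ is uniquely determined.

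Finally, since any two left-monotone plans $\Q,\Q'$ agree on the coupling of $x_{1}$, they induce the same residual marginals $\mu':=\mu-\omega_{1}\delta_{x_{1}}$ and $\nu':=\nu-\Q(x_{1},\cdot)$, and $\mu'\le_{c}\nu'$ because the restriction of $\Q$ to $\{x_{2},\dots,x_{N}\}\times\Supp(\nu')$ is a martingale coupling of $\mu'$ and $\nu'$. That restriction, and the analogous one for $\Q'$, is a left-monotone transport plan of $(\mu',\nu')$: left-monotonicity passes to sub-plans, and by (ii) the atoms that disappear from $\nu$ are not met by it. Depending on whether $(\mu',\nu')$ is one of the two degenerate base cases or again satisfies the standing assumptions (1)--(3) with one atom fewer in the first marginal, the base cases or the inductive hypothesis force the two restrictions to coincide, and therefore $\Q=\Q'$.
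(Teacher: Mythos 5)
Your proof is correct and follows essentially the same strategy as the paper: peel off the smallest atom of $\mu$ and show that left-monotonicity forces its coupling to be a consecutive block of $\nu$-atoms straddling $x_1$ with all interior atoms fully consumed, after which the mass and barycenter equations pin down the endpoint masses. You are in fact somewhat more careful than the paper at the one delicate point --- the paper simply asserts that these conditions ``imply a unique solution,'' whereas your sign argument on $\eta=\Q(x_1,\cdot)-\Q'(x_1,\cdot)$ actually verifies that the block itself cannot differ between two left-monotone plans.
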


\begin{proof}
Suppose  there are at least two different left-monotone transport plans $q, \tilde{q} \in \mathcal{M}_2(\mu,\nu)$. Let $x_{j^*}$ be smallest atom of $\mu$ where there is an $i^{*}\in \{1,\ldots, M\}$ such that $q_{j^*,i^{*}}\neq \tilde{q}_{j^*,i^{*}}.$ We eliminate the variables $q_{j,i}, \tilde{q}_{j,i}$ for $j<j^*$ and $i=1,\ldots,M$ from the transport problem and consider the remaining constraints which for $q$ read:
  \begin{align*}
  &\sum_{i=1}^{M}q_{j,i}=\omega_j, \quad  j=j^*,\ldots, N ,
    \\ \notag
   &\sum_{j=j^*}^{N}q_{j,i}=\vartheta_i - \sum_{j=1}^{j^*-1}q_{j,i} , \quad  i=1,\ldots,M,
    \\ \notag
    & \sum_{i=1}^{M} q_{j,i}(y_i-x_j)=0, \quad j =j^* ,\ldots, N,
    \\ \notag
    &   q_{j,i} \geq 0, \quad j=j^*,\ldots,N, i=1,\ldots, M.
 \end{align*}
The remaining transport plans $q$ and $\tilde{q}$ are again left-monotone. Now consider the atom $x_{j^*}$  with the probability mass $\omega_{j^*}$. By the left-monotonicity $x_{j^*}$  has to be coupled with atoms $y_\ell <y_{\ell+1}<\ldots <y_{\ell+m}$ of $\nu$ such that  $q_{j^*,k}= \vartheta_k$ for $k=\ell+1,\ldots ,\ell+m-1$, $ \sum_{k=0}^m q_{j^*,\ell+k}=\omega_{j^*}$ and $$ x_{j^*} \omega_{j^*} = \sum_{k=0}^m q_{j^*,\ell+k} y_{\ell+k}.$$
However these conditions imply a unique solution. Thus $q$ and $\tilde{q}$ cannot be different.
\end{proof}

Next we write down an algorithm which produces the left-monotone transport plan. Note that Lemma \ref{lem:cximpl} implies that once we fix an $x_j$ then only two situations can occur: either  there exists an $\ell \in \{1,\ldots,M\}$ such that $x_j=y_\ell$ or there exists an $\ell \in \{1,\ldots,M-1\}$ such that $y_\ell<x_j<y_{\ell+1}$. Note that the algorithm works in a recursive way: the mass from $\mu$ to $\nu$ is transported step by step. After the transport we continue with the remaining masses. In particular the index $1$ always refers to the smallest remaining atom of  $\mu$ and thus may be different from the real index.\\

{\bf Algorithm (primal problem):} Start with $x_1$.
\begin{description}
\item[Case I]  There exists $\ell \in \{1,\ldots,M\}$ such that $x_1=y_\ell$
\begin{description}
\item[Case 1] $\omega_1\le \vartheta_\ell$.

Then set $q_{1,\ell}:= \omega_1, q_{1,i}:= 0, i\neq \ell$ and define $ \vartheta_\ell := \vartheta_\ell-\omega_1, \omega_1 := 0$. Cross $x_1$ (and possibly $y_{\ell}$) from the list. All other probability masses are kept unchanged. Continue with the smallest remaining atom of $\mu$.

   \begin{figure}[!htbp]\begin{center}

       \begin{tikzpicture}[scale=1]

               \draw(0,0)--(7,0);
               \draw(0,2)--(7,2);
              \draw(1,2.1)--(1,1.9);
         \node  at (1,1.7) {$x_1$};
          \node at (1,2.3) {$0$};
            \draw(1,0.1)--(1,-0.1);
           \node  at (1,0.3) {$y_\ell$};
            \node at (1,-0.3) {$\vartheta_\ell-\omega_1$};
         { \color{red}{   \draw(1,0)--(1,2);}
            \node  at (1.3,1) {$\omega_1$};}
            \draw(2,2.1)--(2,1.9);
           \node  at (2,1.7) {$x_2$};
          \draw(5,2.1)--(5,1.9);
           \node  at (5,1.7) {$x_N$};
                  \draw(6,0.1)--(6,-0.1);
           \node  at (6,0.3) {$y_M$};

        \end{tikzpicture}
        \caption{Case I, Case 1. The figure shows the new probability masses of the involved atoms.}
  \end{center}
    \end{figure}
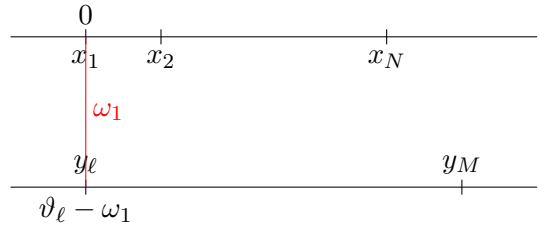

 \item[Case 2] $\omega_1> \vartheta_\ell$.

Then set $q_{1,\ell}:= \vartheta_\ell, q_{j,\ell}:= 0, j\neq 1$ and define $\omega_1 := \omega_1-\vartheta_\ell, \vartheta_\ell := 0$. Cross $y_\ell$ from the list. All other probability masses are kept unchanged. Continue with the smallest remaining atom of $\mu$.

      \begin{figure}[!htbp]\begin{center}

       \begin{tikzpicture}[scale=1]

               \draw(0,0)--(7,0);
               \draw(0,2)--(7,2);
              \draw(1,2.1)--(1,1.9);
         \node  at (1,1.7) {$x_1$};
          \node at (1,2.3) {$\omega_1-\vartheta_\ell$};
            \draw(1,0.1)--(1,-0.1);
           \node  at (1,0.3) {$y_\ell$};
            \node at (1,-0.3) {$0$};
         { \color{red}{   \draw(1,0)--(1,2);}
            \node  at (1.3,1) {$\vartheta_\ell$};}
            \draw(2,2.1)--(2,1.9);
           \node  at (2,1.7) {$x_2$};
          \draw(5,2.1)--(5,1.9);
           \node  at (5,1.7) {$x_N$};
                  \draw(6,0.1)--(6,-0.1);
           \node  at (6,0.3) {$y_M$};

        \end{tikzpicture}
        \caption{Case I, Case 2. The figure shows the new probability masses of the involved atoms}
  \end{center}
    \end{figure}
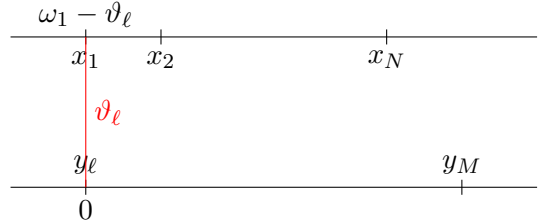

\end{description}

\item[Case II]  There exists $\ell \in \{1,\ldots,M-1\}$ such that $y_\ell<x_1<y_{\ell+1}$.

Then there exist $\vartheta'_\ell, \vartheta'_{\ell+1} \ge 0$ such that
\begin{eqnarray}
\label{eq:leq} \vartheta'_\ell + \vartheta'_{\ell+1} &=& \omega_1\\ \nonumber
 \vartheta'_\ell  y_\ell +\vartheta'_{\ell+1} y_{\ell+1} &=& \omega_1x_1.
\end{eqnarray}
Note that these linear equations have a unique solution.
\begin{description}
 \item[Case 1] $\vartheta'_\ell \le \vartheta_\ell, \vartheta'_{\ell+1}\le \vartheta_{\ell+1}.$

 Then set $q_{1,\ell} := \vartheta'_\ell, q_{1,\ell+1}:= \vartheta'_{\ell+1} , q_{1,i}:=0, i\neq \ell,\ell+1$ and define $\omega_1 := 0, \vartheta_\ell := \vartheta_\ell-\vartheta'_\ell, \vartheta_{\ell+1}:= \vartheta_{\ell+1}-\vartheta'_{\ell+1}.$ Cross $x_1$ from the list (and possibly $y_{\ell}, y_{\ell+1}$). All other probability masses are kept unchanged. Continue with the smallest remaining atom of $\mu$.

      \begin{figure}[!htbp]\begin{center}

       \begin{tikzpicture}[scale=1]

               \draw(-0.5,0)--(7,0);
               \draw(-0.5,2)--(7,2);
              \draw(1,2.1)--(1,1.9);
         \node  at (1,1.7) {$x_1$};
          \node at (1,2.3) {$0$};
            \draw(-0.1,0.1)--(-0.1,-0.1);
           \node  at (-0.1,0.3) {$y_\ell$};
            \node at (-0.1,-0.3) {$\vartheta_\ell-\vartheta'_\ell$};
                  \draw(2.1,0.1)--(2.1,-0.1);
           \node  at (2.1,0.3) {$y_{\ell+1}$};
            \node at (2.1,-0.3) {$\vartheta_{\ell+1}-\vartheta'_{\ell+1}$};

         { \color{red}{   \draw(-0.1,0)--(1,2);}
            \node  at (0,1.1) {$\vartheta'_\ell$};
            \draw(2.1,0)--(1,2);
             \node  at (2,1.1) {$\vartheta'_{\ell+1}$};}
           \node  at (2,1.7) {$x_2$};
          \draw(5,2.1)--(5,1.9);
           \node  at (5,1.7) {$x_N$};
                  \draw(6,0.1)--(6,-0.1);
           \node  at (6,0.3) {$y_M$};

        \end{tikzpicture}
        \caption{Case II, Case 1. The figure shows the new probability masses of the involved atoms}
  \end{center}
    \end{figure}
 \item[Case 2] $\vartheta'_\ell > \vartheta_\ell, \vartheta'_{\ell+1}\le \vartheta_{\ell+1}.$

 In this case we multiply the linear equations of  \eqref{eq:leq} with $\frac{\vartheta_\ell}{\vartheta'_\ell}$ to obtain:
 \begin{eqnarray*}
 \vartheta_\ell + \frac{\vartheta_\ell}{\vartheta'_\ell} \vartheta'_{\ell+1} &=& \omega_1 \frac{\vartheta_\ell}{\vartheta'_\ell}\\
 \vartheta_\ell  y_\ell +\frac{\vartheta_\ell}{\vartheta'_\ell} \vartheta'_{\ell+1} y_{\ell+1} &=& \omega_1 \frac{\vartheta_\ell}{\vartheta'_\ell}x_1.
\end{eqnarray*}

 Then set $q_{1,\ell} := \vartheta_\ell, q_{1,\ell+1}:= \frac{\vartheta_\ell}{\vartheta'_\ell}\vartheta'_{\ell+1} , q_{i,\ell}:=0, i\neq 1$ and define $\omega_1 := \omega_1(1-\frac{\vartheta_\ell}{\vartheta'_\ell}), \vartheta_{\ell+1}:= \vartheta_{\ell+1}-\frac{\vartheta_\ell}{\vartheta'_\ell}\vartheta'_{\ell+1}$, $\vartheta_\ell := 0$. Cross $y_\ell$ from the list. All other probability masses are kept unchanged. Continue with the smallest remaining atom of $\mu$. Note that after this step there is still probability mass on $x_1$ and $y_{\ell+1}$ which could result in the next step in another transport from $x_1$ to $y_{\ell+1}$. At the end these transported masses have to be added.

  \begin{figure}[!htbp]\begin{center}

       \begin{tikzpicture}[scale=1]

               \draw(-0.5,0)--(7,0);
               \draw(-0.5,2)--(7,2);
              \draw(1,2.1)--(1,1.9);
         \node  at (1,1.7) {$x_1$};
          \node at (1,2.3) {$\omega_1(1-\frac{\vartheta_\ell}{\vartheta'_\ell})$};
            \draw(-0.1,0.1)--(-0.1,-0.1);
           \node  at (-0.1,0.3) {$y_\ell$};
            \node at (-0.1,-0.3) {$0$};
                  \draw(2.1,0.1)--(2.1,-0.1);
           \node  at (2.1,0.3) {$y_{\ell+1}$};
            \node at (2.1,-0.5) {$\vartheta_{\ell+1}-\frac{\vartheta_\ell}{\vartheta'_\ell}\vartheta'_{\ell+1}$};

         { \color{red}{   \draw(-0.1,0)--(1,2);}
            \node  at (0,1.1) {$\vartheta_\ell$};
            \draw(2.1,0)--(1,2);
             \node  at (2.2,1.1) {$\frac{\vartheta_\ell}{\vartheta'_\ell}\vartheta'_{\ell+1}$};}
           \node  at (2,1.7) {$x_2$};
          \draw(5,2.1)--(5,1.9);
           \node  at (5,1.7) {$x_N$};
                  \draw(6,0.1)--(6,-0.1);
           \node  at (6,0.3) {$y_M$};

        \end{tikzpicture}
        \caption{Case II, Case 2. The figure shows the new probability masses of the involved atoms}
  \end{center}
    \end{figure}
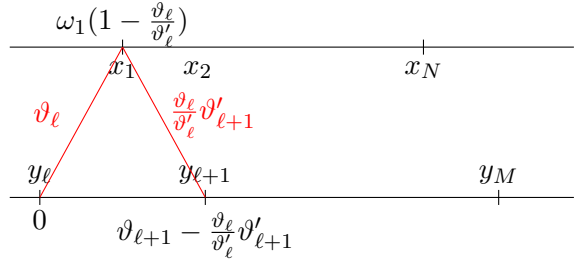

  \item[Case 3] $\vartheta'_\ell \le \vartheta_\ell, \vartheta'_{\ell+1}> \vartheta_{\ell+1}.$

 In this case we multiply the linear equation \eqref{eq:leq} with $\frac{\vartheta_{\ell+1}}{\vartheta'_{\ell+1}}$ to obtain:
 \begin{eqnarray*}
 \vartheta'_\ell \frac{\vartheta_{\ell+1}}{\vartheta'_{\ell+1}} + \vartheta_{\ell+1} &=& \omega_1 \frac{\vartheta_{\ell+1}}{\vartheta'_{\ell+1}}\\
 \vartheta'_\ell \frac{\vartheta_{\ell+1}}{\vartheta'_{\ell+1}}  y_\ell + \vartheta_{\ell+1} y_{\ell+1} &=& \omega_1 \frac{\vartheta_{\ell+1}}{\vartheta'_{\ell+1}}x_1.
\end{eqnarray*}

 Then set $q_{1,\ell} := \frac{\vartheta_{\ell+1}}{\vartheta'_{\ell+1}} \vartheta'_\ell, q_{1,\ell+1}:=\vartheta_{\ell+1} , q_{i,\ell+1}:=0, i\neq 1$ and define $\omega_1 := \omega_1(1-\frac{\vartheta_{\ell+1}}{\vartheta'_{\ell+1}}), \vartheta_\ell := \vartheta_\ell- \frac{\vartheta_{\ell+1}}{\vartheta'_{\ell+1}}\vartheta'_\ell, \vartheta_{\ell+1}:= 0.$ Cross $y_{\ell+1}$ from the list. All other probability masses are kept unchanged. Continue with the smallest remaining atom of $\mu$. Note that after this step there is still probability mass on $x_1$ and $y_{\ell}$ which could result in the next step in another transport from $x_1$ to $y_{\ell}$. At the end these transported masses have to be added.
  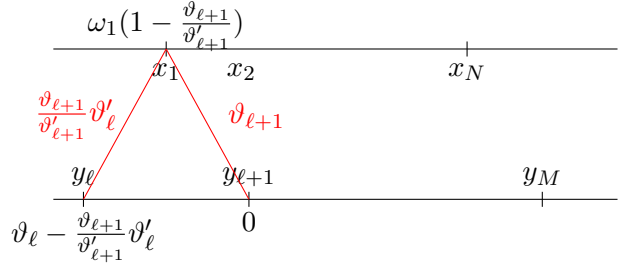
\begin{figure}[!htbp]\begin{center}

       \begin{tikzpicture}[scale=1]

               \draw(-0.5,0)--(7,0);
               \draw(-0.5,2)--(7,2);
              \draw(1,2.1)--(1,1.9);
         \node  at (1,1.7) {$x_1$};
          \node at (1,2.3) {$\omega_1(1-\frac{\vartheta_{\ell+1}}{\vartheta'_{\ell+1}})$};
            \draw(-0.1,0.1)--(-0.1,-0.1);
           \node  at (-0.1,0.3) {$y_\ell$};
            \node at (-0.1,-0.5) {$\vartheta_\ell- \frac{\vartheta_{\ell+1}}{\vartheta'_{\ell+1}}\vartheta'_\ell$};
                  \draw(2.1,0.1)--(2.1,-0.1);
           \node  at (2.1,0.3) {$y_{\ell+1}$};
            \node at (2.1,-0.3) {$0$};

         { \color{red}{   \draw(-0.1,0)--(1,2);}
            \node  at (-0.2,1.1) {$\frac{\vartheta_{\ell+1}}{\vartheta'_{\ell+1}}\vartheta'_\ell$};
            \draw(2.1,0)--(1,2);
             \node  at (2.2,1.1) {$\vartheta_{\ell+1}$};}
           \node  at (2,1.7) {$x_2$};
          \draw(5,2.1)--(5,1.9);
           \node  at (5,1.7) {$x_N$};
                  \draw(6,0.1)--(6,-0.1);
           \node  at (6,0.3) {$y_M$};

        \end{tikzpicture}
        \caption{Case II, Case 3. The figure shows the new probability masses of the involved atoms}
  \end{center}
    \end{figure}

   \item[Case 4] $\vartheta'_\ell > \vartheta_\ell, \vartheta'_{\ell+1}> \vartheta_{\ell+1}.$

Define here $k := \min\big\{\frac{\vartheta_{\ell}}{\vartheta'_{\ell}}, \frac{\vartheta_{\ell+1}}{\vartheta'_{\ell+1}} \big\}.$ If $k= \frac{\vartheta_{\ell}}{\vartheta'_{\ell}}$ proceed as in case 2. If $k=\frac{\vartheta_{\ell+1}}{\vartheta'_{\ell+1}}$ proceed as in case 3.
\end{description}
\end{description}

\begin{theorem}
The algorithm produces the left-monotone transport plan in a finite number of steps.
\end{theorem}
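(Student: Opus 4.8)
The plan is to verify in turn that the algorithm (i)~terminates after finitely many iterations, (ii)~returns an element $q^{*}$ of $\mathcal{M}_2(\mu,\nu)$, and (iii)~returns a left-monotone plan. Since it has just been shown that there is a \emph{unique} left-monotone transport plan, (iii) then identifies $q^{*}$ with $\Q_l(\mu,\nu)$.

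\emph{Termination.} Inspecting the sub-cases, every iteration deletes at least one point from $\{x_1,\dots,x_N\}\cup\{y_1,\dots,y_M\}$, so there are at most $N+M$ of them; what has to be excluded is that the algorithm stalls while mass is still present. For this I would carry along the invariant that the residual measures $\mu',\nu'$ stay in convex order, $\mu'\le_c\nu'$. Granting it, Lemma~\ref{lem:cximpl} applied to $(\mu',\nu')$ says that $\min\Supp(\mu')$ is either an atom of $\nu'$ (so Case~I applies) or lies strictly between two consecutive atoms of $\nu'$ (Case~II), so a case is always available and, in particular, the ``marching'' in Case~II never runs off the end of the list. To see that the invariant persists, note first that every iteration removes the same total mass and the same first moment from $\mu'$ as from $\nu'$ — trivially in Case~I, where a common atom $c\,\delta_{x_1}$ is subtracted from both, and in Case~II from \eqref{eq:leq} together with the rescalings used in its sub-cases. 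By Lemma~\ref{Prop Char Conv Ord MultiMarginal} it therefore suffices to check that $C_{\mu'}\le C_{\nu'}$ is preserved, which is immediate in Case~I. In Case~II one removes $m\,\delta_{x_1}$ from $\mu'$ and $p\,\delta_{y_\ell}+q\,\delta_{y_{\ell+1}}$ from $\nu'$ with $p+q=m$, $p y_\ell+q y_{\ell+1}=m x_1$, and $p\le\nu'(\{y_\ell\})$ — the last inequality checked separately in each sub-case — so that $C_{\nu''}-C_{\mu''}$ equals $(C_{\nu'}-C_{\mu'})$ minus a nonnegative, piecewise-linear ``tent'' supported on $[y_\ell,y_{\ell+1}]$ with apex $p(x_1-y_\ell)$ at $x_1$. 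On $[y_\ell,y_{\ell+1}]$ the function $C_{\nu'}-C_{\mu'}$ is concave (there $C_{\nu'}$ is affine, since $\nu'$ charges no interior point), while on $[y_\ell,x_1]$ it is affine with slope at least $\nu'(\{y_\ell\})$ (there the slope of $C_{\mu'}$ is the minimal possible value $-\mu'(\R)=-\nu'(\R)$, because $x_1=\min\Supp(\mu')$). Comparing these two concave functions at $y_\ell$, $x_1$, $y_{\ell+1}$ puts $C_{\nu'}-C_{\mu'}$ above the tent, i.e.\ $C_{\mu''}\le C_{\nu''}$ after the step.

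\emph{Feasibility of $q^{*}$.} This is bookkeeping. Nonnegativity is clear. Summing over all iterations that touch a fixed $x_j$ (resp.\ $y_i$) the masses placed there recovers $\omega_j$ (resp.\ $\vartheta_i$), since the algorithm halts only when all residual mass is gone and, by the conservation just noted, $\mu$ is exhausted exactly when $\nu$ is. Finally, the part of $q^{*}$ produced in a single iteration is martingale-balanced along its $x$-row — trivially in Case~I (there $y_\ell=x_1$) and by \eqref{eq:leq} and its rescalings in Case~II — so $\sum_i q^{*}_{j,i}(y_i-x_j)=0$ for every $j$.

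\emph{Left-monotonicity.} This is the substantive step, and the one I expect to be the main obstacle. I would first record two structural features of the run. Because the sub-cases that do not delete $x_1$ leave it as the smallest remaining atom of $\mu$, the atoms of $\mu$ are processed in increasing order, each one completely before the next. Moreover, the atoms with which a given $x$ is coupled form a block that is \emph{consecutive in the residual $\nu$ present when $x$ is first taken up}, and every atom strictly interior to this block is emptied entirely into $x$: the algorithm starts from the straddling pair of $x$ (or from $x$ itself, in Case~I) and, whenever it must use up an extreme atom of the current block, it moves to the immediately adjacent remaining atom, so the block grows as an interval and any previously only partially filled atom that is revisited becomes interior and is used up. Consequently, if $x$ is coupled with $y^{-}<y^{+}$, then once $x$ has been processed the open interval $(y^{-},y^{+})$ carries no $\nu$-mass — the atoms there that were present when $x$ was taken up are interior to $x$'s block, hence absorbed, and those already empty stay empty. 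Since every $x'>x$ is handled later and $\nu$-mass is never created, no such $x'$ can be coupled with a point of $(y^{-},y^{+})$; with $\Gamma:=\{(x_j,y_i):q^{*}_{j,i}>0\}$ this is exactly the requirement of Definition~\ref{def:leftmo}. The delicate point is the block-structure claim — in particular that an atom of $\mu$ may alternately exhaust atoms to its left and to its right and still end coupled to a consecutive block with fully absorbed interior — which calls for a careful account of which atoms of $\nu$ have been partly used up at each stage.
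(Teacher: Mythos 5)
Your proposal is correct and, on the key technical point, takes a genuinely different route from the paper. The paper's proof consists of exactly your items (i) and (ii) plus the preservation of the convex order, but it establishes the latter by a separate computation for each sub-case of Case II, splitting into the three regimes $t\le y_\ell$, $t\ge y_{\ell+1}$, $t\in(y_\ell,y_{\ell+1})$ and checking in the middle regime that an explicitly written piecewise-linear function $f$ is concave with $f(y_\ell)=f(y_{\ell+1})\ge 0$. Your tent argument unifies all sub-cases: once you record $p+q=m$, $py_\ell+qy_{\ell+1}=mx_1$ and $p\le\nu'(\{y_\ell\})$, the single comparison of the concave function $C_{\nu'}-C_{\mu'}$ with the tent of apex $p(x_1-y_\ell)$ at $x_1$ does the work, and your slope computation on $[y_\ell,x_1]$ (slope of $C_{\nu'}-C_{\mu'}$ equals $\nu'((-\infty,y_\ell])\ge p$ there, precisely because $x_1=\min\Supp(\mu')$) is the clean reason the apex is cleared; this is both shorter and more transparent than the paper's case analysis. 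Your part (iii) is actually \emph{more} than the paper does: the paper's proof of this theorem never verifies left-monotonicity of the output, leaving it to the reader via Lemmas \ref{lem:c1}, \ref{lem:c2} and the uniqueness theorem. The ``delicate point'' you flag is not a real obstacle: within the processing of a fixed atom $x$ of $\mu$, one shows by induction on the sub-steps that the set of residual atoms of $\nu$ already touched by $x$ is an interval whose two endpoints are exactly the current straddling pair and whose interior atoms are exhausted --- each sub-step either terminates (Case II.1) or deletes one endpoint and replaces it by the adjacent remaining atom (Cases II.2/II.3), so the invariant propagates in one line. With that, your conclusion that no later $x'>x$ can be coupled into $(y^-,y^+)$ is complete. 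One cosmetic caveat: ``comparing two concave functions at three points'' is not a valid principle in general; what you actually use (and should say) is that the tent is affine on each of $[y_\ell,x_1]$ and $[x_1,y_{\ell+1}]$ and is dominated at the endpoints of each piece by the concave function $C_{\nu'}-C_{\mu'}$.
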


\begin{proof}
In each step of the algorithm at least one atom is crossed from the list. In the final step at least two atoms vanish. Thus, the algorithm stops in at most $N+M-1$ steps. Moreover, the algorithms produces a $\Q\in  \mathcal{M}_{2}(\mu,\nu)$  since
\begin{enumerate}
\item the whole probability mass $\omega_j$ is transported from $x_j$ and the whole mass $\vartheta_i$ is received by $y_i$.
\item the martingale property is satisfied. This is obvious in Case I and guaranteed by the second linear equations  in Case II.
\end{enumerate}
What is left to show is that the algorithm continues after each step, i.e. that the remaining probability masses still satisfy $\mu\le_c \nu.$ For this property indeed it is crucial that the algorithm starts with the smallest atom $x_1$. We will show this statement for each case separately by using the characterization of Lemma \ref{Prop Char Conv Ord MultiMarginal} of the convex order. Note that the condition $\frac{1}{\mu(\R)}\int x\mu(dx)= \frac{1}{\nu(\R)}\int x\nu(dx)$ is satisfied in each step, since the same probability masses are subtracted on both sides and the integrals remain the same due to the second equation.

\begin{description}
\item[Case I]  There exists $\ell \in \{1,\ldots,M\}$ such that $x_1=y_\ell$.
Initially we have by assumption that \begin{equation}\label{eq:cnew}
\Eop[(X-t)_+]= \sum_{j=1}^N(x_j-t)_+\omega_j \le\sum_{i=1}^M(y_i-t)_+\vartheta_i= \Eop[(Y-t)_+].\end{equation} In Case I the new probability mass is constructed by subtracting on both sides the same mass which belongs to the same value. Thus, obviously the inequality still holds true and the remaining masses still satisfy $\mu\le_c\nu.$
\item[Case II]  There exists $\ell \in \{1,\ldots,M-1\}$ such that $y_\ell<x_1<y_{\ell+1}$.
\begin{description}
 \item[Case 1] $\vartheta'_\ell \le \vartheta_\ell, \vartheta'_{\ell+1}\le \vartheta_{\ell+1}.$

 First assume that $t\le y_\ell$. In what follows we will always set $m_t := \min\{i: y_i > t\}$. By assumption we have that
 $$\Eop[(X-t)_+]= \sum_{j=1}^N(x_j-t)\omega_j \le\sum_{i=m_t}^M(y_i-t)\vartheta_i= \Eop[(Y-t)_+].$$ This implies
 \begin{eqnarray*}
 \sum_{j=2}^N(x_j-t)\omega_j  &\le & \sum_{i=m_t}^M(y_i-t)\vartheta_i-(x_1-t) \omega_1\\
 &=& \sum_{i=m_t}^M(y_i-t)\vartheta_i-(y_\ell-t)\vartheta'_\ell -(y_{\ell+1}-t)\vartheta'_{\ell+1}
 \end{eqnarray*}
  which implies that the new masses still satisfy the inequality.
  Now assume that $t\ge y_{\ell+1}$. In this case all atoms where masses are changed do not enter the expectation. Hence obviously the inequality \eqref{eq:cnew} is satisfied for the new masses. Finally assume that $t\in (y_\ell,y_{\ell+1})$ and let $n_t := \min\{j: x_j > t\}$. Here we have to show that
  $$  \sum_{j=n_t}^N(x_j-t)\omega_j  \le \sum_{i=\ell+1}^M(y_i-t)\vartheta_i-(y_{\ell+1}-t)\vartheta'_{\ell+1}$$
which is equivalent to showing that $f(t)\ge 0$ for $ t\in (y_\ell,y_{\ell+1})$ where
$$ f(t) := \sum_{i=\ell+1}^My_i\vartheta_i-y_{\ell+1}\vartheta'_{\ell+1}-
\sum_{j=n_t}^Nx_j\omega_j + t\Big( \sum_{j=n_t}^N \omega_j - \sum_{i=\ell+1}^M \vartheta_i+\vartheta'_{\ell+1}\Big).
$$
Note that
\begin{enumerate}
\item $f$ is continuous and piecewise linear (it changes its slope at $x_j$).
\item $f(y_\ell)=f(y_{\ell+1})\ge 0.$
\item the slope is decreasing in $t$, hence $f$ is concave.
\end{enumerate}
Thus we can conclude that $f(t)\ge 0$ for $ t\in (y_\ell,y_{\ell+1})$.
  \item[Case 2] $\vartheta'_\ell > \vartheta_\ell, \vartheta'_{\ell+1}\le \vartheta_{\ell+1}.$

   First assume that $t\le y_\ell$. By assumption we have that
 $$\Eop[(X-t)_+]= \sum_{j=1}^N(x_j-t)\omega_j \le\sum_{i=m_t}^M(y_i-t)\vartheta_i= \Eop[(Y-t)_+].$$ This implies
 \begin{eqnarray*}
 \sum_{j=1}^N(x_j-t)\omega_j  -(x_1-t)\omega_1 \frac{\vartheta_\ell}{\vartheta'_{\ell}}&\le & \sum_{i=m_t}^M(y_i-t)\vartheta_i-(x_1-t)\omega_1 \frac{\vartheta_\ell}{\vartheta'_{\ell}}\\ &=&\sum_{i=m_t}^M(y_i-t)\vartheta_i-\\ &&-(y_\ell-t)\vartheta_\ell-(y_{\ell+1}-t)  \frac{\vartheta_\ell}{\vartheta'_{\ell}} \vartheta'_{\ell+1}
 \end{eqnarray*}
  which implies that the new masses still satisfy the inequality.   Now assume that $t\ge y_{\ell+1}$. In this case all atoms where masses are changed do not enter the expectation. Hence obviously the inequality \eqref{eq:cnew} is satisfied for the new masses. Finally assume that $t\in (y_\ell,y_{\ell+1})$. Define
 \begin{eqnarray*}f(t) &:=& \sum_{i=\ell+1}^My_i\vartheta_i-y_{\ell+1}\frac{\vartheta_\ell}{\vartheta'_\ell}\vartheta'_{\ell+1}-
\sum_{j=n_t}^Nx_j\omega_j -1_{[n_t=1]}x_1\omega_1\frac{\vartheta_\ell}{\vartheta'_\ell}\\&&+ t\Big( \sum_{j=n_t}^N \omega_j - 1_{[n_t=1]}\omega_1\frac{\vartheta_\ell}{\vartheta'_\ell}-\sum_{i=\ell+1}^M \vartheta_i+\frac{\vartheta_\ell}{\vartheta'_\ell}\vartheta'_{\ell+1}\Big).
\end{eqnarray*}
Note that
\begin{enumerate}
\item $f$ is continuous and piecewise linear (it changes its slope at $x_j$).
\item $f(y_\ell)=f(y_{\ell+1})\ge 0.$
\item the slope is decreasing in $t$, hence $f$ is concave.
\end{enumerate}
  Thus we can conclude that $f(t)\ge 0$ for $ t\in (y_\ell,y_{\ell+1})$.
    \item[Case 3] $\vartheta'_\ell \le \vartheta_\ell, \vartheta'_{\ell+1}> \vartheta_{\ell+1}.$

      First assume that $t\le y_\ell$. By assumption we have that
 $$\Eop[(X-t)_+]= \sum_{j=1}^N(x_j-t)\omega_j \le\sum_{i=m_t}^M(y_i-t)\vartheta_i= \Eop[(Y-t)_+].$$ This implies
 \begin{eqnarray*}
 \sum_{j=1}^N(x_j-t)\omega_j  -(x_1-t)\omega_1 \frac{\vartheta_{\ell+1}}{\vartheta'_{\ell+1}}&\le & \sum_{i=m_t}^M(y_i-t)\vartheta_i-(x_1-t)\omega_1 \frac{\vartheta_{\ell+1}}{\vartheta'_{\ell+1}}\\ &=&\sum_{i=m_t}^M(y_i-t)\vartheta_i-\\ &&-(y_\ell-t)\vartheta'_\ell \frac{\vartheta_{\ell+1}}{\vartheta'_{\ell+1}}     -(y_{\ell+1}-t)  \vartheta_{\ell+1}
 \end{eqnarray*}
  which implies that the new masses still satisfy the inequality.  Now assume that $t\ge y_{\ell+1}$. In this case all atoms where masses are changed do not enter the expectation. Hence obviously the inequality \eqref{eq:cnew} is satisfied for the new masses. Finally assume that $t\in (y_\ell,y_{\ell+1})$. Define
 \begin{eqnarray*}f(t) &:=& \sum_{i=\ell+2}^My_i\vartheta_i-
\sum_{j=n_t}^Nx_j\omega_j -1_{[n_t=1]}x_1\omega_1\frac{\vartheta_{\ell+1}}{\vartheta'_{\ell+1}}\\&&+ t\Big( \sum_{j=n_t}^N \omega_j - 1_{[n_t=1]}\omega_1\frac{\vartheta_{\ell+1}}{\vartheta'_{\ell+1}}-\sum_{i=\ell+2}^M \vartheta_i\Big).
\end{eqnarray*}
  Note that
\begin{enumerate}
\item $f$ is continuous and piecewise linear (it changes its slope at $x_j$).
\item $f(y_\ell)=f(y_{\ell+1})\ge 0.$
\item the slope is decreasing in $t$, hence $f$ is concave.
\end{enumerate}
  Thus we can conclude that $f(t)\ge 0$ for $ t\in (y_\ell,y_{\ell+1})$.

       \item[Case 4] $\vartheta'_\ell > \vartheta_\ell, \vartheta'_{\ell+1}> \vartheta_{\ell+1}.$

       This case either leads to case 2 or case 3.
       \end{description}
\end{description}
In total we have shown that the algorithm can be continued the same way in the next step.
\end{proof}

\section{An Algorithm for the Construction of the Optimal Solution of (D)}\label{sec:dual}
In order to construct an optimal solution for the dual problem it is clear due to the complementary slackness condition that whenever $q_{j,i}>0$ we must have
$$ \varphi_j + \psi_i + h_j (y_i-x_j)= c_{j,i}.$$
Thus, when performing the algorithm for the primal problem we can at the same time fix some variables for the optimal solution of the hedging problem. This algorithm is again recursive. Note that we have to replace $1$ and $\ell$ by the true indices of these atoms.

{\bf Algorithm (dual problem):}

\begin{description}
\item[Case I]  There exists $\ell \in \{1,\ldots,M\}$ such that $x_1=y_\ell$.
\begin{description}
\item[Case 1] $\omega_1\le \vartheta_\ell$.

In this case $x_1$ is crossed from the list and we fix $\varphi_1 := c_{1,\ell}-\psi_\ell$ since it will not appear in the remaining equations.
 \item[Case 2] $\omega_1> \vartheta_\ell$.
 In this case $y_\ell$ is crossed from the list and we fix $\psi_\ell := c_{1,\ell}-\varphi_1$ since it will not appear in the remaining equations.
 \end{description}

\item[Case II]  There exists $\ell \in \{1,\ldots,M-1\}$ such that $y_\ell<x_1<y_{\ell+1}$.
\begin{description}
 \item[Case 1] $\vartheta'_\ell \le \vartheta_\ell, \vartheta'_{\ell+1}\le \vartheta_{\ell+1}.$

 In this case we cross $x_1$ from the list and we fix $\varphi_1, h_1$ as the solution of
 \begin{eqnarray*}
 && \varphi_1 + \psi_\ell + h_1 (y_\ell-x_1)= c_{1,\ell}\\
 && \varphi_1 + \psi_{\ell+1} + h_1 (y_{\ell+1}-x_1)= c_{1,\ell+1}
 \end{eqnarray*}
 since this atom will not appear in the remaining equations.

 \item[Case 2] $\vartheta'_\ell > \vartheta_\ell, \vartheta'_{\ell+1}\le \vartheta_{\ell+1}.$

 In this case we cross $y_\ell$ from the list and we fix $\psi_\ell$ as
 \begin{eqnarray*}
 &&  \psi_\ell = c_{1,\ell} - \varphi_1 - h_1 (y_\ell-x_1)
 \end{eqnarray*}
 since this atom will not appear in the remaining equations.

    \item[Case 3] $\vartheta'_\ell \le \vartheta_\ell, \vartheta'_{\ell+1}> \vartheta_{\ell+1}.$

  In this case we cross $y_{\ell+1}$ from the list and we fix $\psi_{\ell+1}$ as
 \begin{eqnarray*}
 &&  \psi_{\ell+1} = c_{1,\ell+1} - \varphi_1 - h_1 (y_{\ell+1}-x_1)
 \end{eqnarray*}
 since this atom will not appear in the remaining equations.

       \item[Case 4] $\vartheta'_\ell > \vartheta_\ell, \vartheta'_{\ell+1}> \vartheta_{\ell+1}.$

       This case either leads to case 2 or case 3.
       \end{description}
\end{description}

Note that in each step we can choose the free variable since we only fix variables when the corresponding atom is crossed from the list. Thus, during the algorithm all $\varphi_j$ are determined and some of the $h_j$ and $\psi_i$. The remaining $h_j$ and $\psi_i$ have to be chosen such that the inequalities
 $$ \varphi_j + \psi_i + h_j (y_i-x_j)\geq c_{j,i}$$
are satisfied and the objective function is minimized. This is possible due to complementary slackness.

\section{An Example}\label{sec:example}

In this section we illustrate our algorithms to solve the problems \ref{Prob Upper Price Bound Discrete} and \eqref{Prob Super Hedging Discrete} for a payoff function $c:\R^2 \to \R$ such that \eqref{eq:optcondition} is satisfied. We consider the margins
\[
  \mu=\frac{1}{2}(\delta_{1}+\delta_3) \leq_c \nu= \frac{1}{2}\delta_0 + \frac{1}{6}\delta_2 + \frac{1}{3}\delta_5
\]
and begin illustrating the algorithm that determines $\Q_l(\mu,\nu)$ and thus solves \eqref{Prob Upper Price Bound Discrete}. First, we couple the mass of $\delta_1$. Therefore we solve
\begin{eqnarray}
\nonumber \vartheta'_1 + \vartheta'_{2} &=& \omega_1=\frac12\\ \nonumber
 \vartheta'_1 \cdot 0 +\vartheta'_{2} \cdot 2 &=& \omega_1 \cdot 1 = \frac12,
\end{eqnarray}
which yields $\vartheta'_1=\vartheta'_2=\frac{1}{4}.$ As $\vartheta'_1 \le \vartheta_1, \vartheta'_2> \vartheta_2$ we are in \textbf{case II, 3}. Thus we define $q_{1,1}=q_{1,2}=\frac{1}{6}$, $\omega_1=\frac{1}{6}$, $\vartheta_1=\frac{1}{3}$ and $\vartheta_2=0$.

Then we proceed to couple the remaining mass of $\delta_1$ and thus solve
\begin{eqnarray}
\nonumber \vartheta'_1 + \vartheta'_{3} &=& \omega_1=\frac16\\ \nonumber
 \vartheta'_1 \cdot 0 +\vartheta'_{3} \cdot 5 &=& \omega_1 \cdot 1=\frac16,
\end{eqnarray}
which yields $\vartheta'_1= \frac{2}{15},\vartheta'_3=\frac{1}{30}.$ As $\vartheta'_1 \le \vartheta_1, \vartheta'_3\le \vartheta_3$ we are in \textbf{case  II, 1}. Thus we define $q_{1,1}=\frac{2}{15}, q_{1,3}=\frac{1}{30}$, $\omega_1=0$, $\vartheta_1=\frac{1}{5}$ and $\vartheta_3=\frac{3}{10}$.

Finally, we couple the mass of $\delta_3$. Therefore we solve
\begin{eqnarray}
\nonumber \vartheta'_1 + \vartheta'_{3} &=& \omega_2=\frac12\\ \nonumber
 \vartheta'_1 \cdot 0 +\vartheta'_{3} \cdot 5 &=& \omega_2 \cdot 3=\frac32,
\end{eqnarray}
which clearly yields $\vartheta'_1= \frac{1}{5},\vartheta'_3=\frac{3}{10}$ and as $\vartheta'_1 \le \vartheta_1, \vartheta'_3\le \vartheta_3$ we are in \textbf{case II, 1}. Thus we define $q_{2,1}=\frac{1}{5}, q_{2,3}=\frac{3}{10}$, $\omega_2=0$, $\vartheta_1=0$ and $\vartheta_3=0$. Note that in this example mass between the same atoms is transported in two different steps. These masses have to be added in the end.

In total this yields the following unique left-monotone transport plan \[
  \Q_l(\mu,\nu)=\frac{3}{10}\delta_{(1,0)} + \frac{1}{6}\delta_{(1,2)} + \frac{1}{30}\delta_{(1,5)} + \frac{1}{5}\delta_{(3,0)}+ \frac{3}{10}\delta_{(3,5)}.
\]

    \begin{figure}[!htbp]\begin{center}

       \begin{tikzpicture}[scale=1]

          \node  at (0,0.4) {$1$};
          \node at (2,0.4) {$3$};
         \fill (0,0) circle (0.05);
         \fill (2,0) circle (0.05);

          \node at (-2,-3.4) {$0$};
      \node at (1,-3.4) {$2$};
          \node at (3,-3.4) {$5$};
           \fill (-2,-3) circle (0.05);
                  \fill (1,-3) circle (0.05);
                  \fill (3,-3) circle (0.05);

                  \draw(0,0)--(-2,-3);
                     \draw(0,0)--(1,-3);
                      \draw(0,0)--(3,-3);
                     \draw(2,0)--(-2,-3);
                      \draw(2,0)--(3,-3);

                        \node  at (-1.4,-1.5) {$\frac{3}{10}$};
                         \node  at (2.8,-1.5) {$\frac{3}{10}$};
                          \node  at (1.2,-2.5) {$\frac{1}{6}$};
                            \node  at (-0.9,-2.5) {$\frac{1}{5}$};
                              \node  at (1.2,-1.5) {$\frac{1}{30}$};

        \end{tikzpicture}
        \caption{Left-monotone transport plan in the example.}
  \end{center}
    \end{figure}
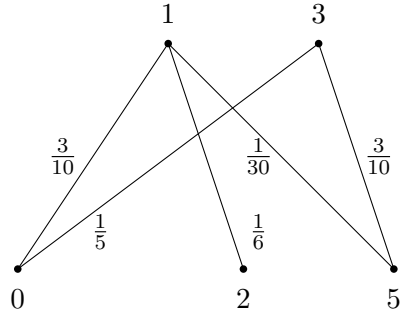

Parallel to determining $\Q_{l}(\mu,\nu)$ we may determine the hedging functions $\varphi_1,\varphi_2$, $\psi_2$, and $h_1,h_2$ as follows. In the first step, where we are in \textbf{case II, 3}, we define \[
  \psi_2=c_{1,2}-\varphi_1-h_1(y_2-x_1)=c_{1,2}-\varphi_1-h_1.
\]
In the second step, where we are in \textbf{case II,1}, we solve the given linear equation system which yields\[
  h_1=\frac{c_{1,3}-c_{1,1}-(\psi_3-\psi_1)}{y_3-y_1}=\frac{c_{1,3}-c_{1,1}-(\psi_3-\psi_1)}{5}
\]
and
\[
  \varphi_1 = \frac{y_3-x_1}{y_3-y_1}(c_{1,1}-\psi_1)+\frac{x_1-y_1}{y_3-y_1}(c_{1,3}-\psi_3) = \frac{4}{5}(c_{1,1}-\psi_1)+\frac{1}{5}(c_{1,3}-\psi_3).
\]
Finally, in the third step we analogously determine
\[
  h_2=\frac{c_{2,3}-c_{2,1}-(\psi_3-\psi_1)}{y_3-y_1}=\frac{c_{2,3}-c_{2,1}-(\psi_3-\psi_1)}{5}
\]
and
\[
  \varphi_2 = \frac{y_3-x_2}{y_3-y_1}(c_{2,1}-\psi_1)+\frac{x_2-y_1}{y_3-y_1}(c_{2,3}-\psi_3) = \frac{2}{5}(c_{2,1}-\psi_1)+\frac{3}{5}(c_{2,3}-\psi_3).
\]
Then by construction we have \[
  \varphi_j+\psi_i+h_j(y_i-x_j)=c_{j,i}
\]
for all $j,i$ such that $(x_j,y_i) \in \mathrm{supp}(\Q_{l}(\mu,\nu)).$ The only remaining inequality to be satisfied is
\[
  \varphi_2+\psi_2+h_2(y_2-x_2)\geq c_{2,2}.
\]
Plugging in all defined notions, after a short calculation we obtain that the above is equivalent to\[
  \frac{3}{5}(c_{2,1}-c_{1,1})+\frac{2}{5}(c_{2,3}-c_{1,3})-(c_{2,2}-c_{1,2}) \geq 0,
\] which is satisfied by assumption. That is, we may define $\psi_1=\psi_3=0$ and thus obtain
$
  \varphi_1=\frac{4}{5}c_{1,1}+\frac{1}{5}c_{1,3},\ h_1=\frac{c_{1,3}-c_{1,1}}{5},\ \varphi_2=\frac{2}{5}c_{2,1}+\frac{3}{5}c_{2,3},\ h_2=\frac{c_{2,3}-c_{2,1}}{5},$ and $\psi_{2}=c_{1,2}-\frac{3}{5}c_{1,1}-\frac{2}{5}c_{1,3}.
$
Note that the solution of the dual problem is not unique.
For further examples we refer the reader to \cite{schm18}.

\end{document}